\newcommand{\Real}{{\mathbb{R}}}
\newcommand{\beq}{\begin{equation}}
\newcommand{\eeq}{\end{equation}}
\newcommand{\beqa}{\begin{eqnarray}}
\newcommand{\eeqa}{\end{eqnarray}}
\theoremstyle{plain}
\newtheorem{theorem}{Theorem}[section]
\newtheorem{lemma}[theorem]{Lemma}
\theoremstyle{remark}
\newtheorem{remark}[theorem]{Remark}
\numberwithin{equation}{section}
\def\ds{\displaystyle}
\def\ni{\noindent}
\def\bs{\bigskip}
\def\ms{\medskip}
\def\eps{\varepsilon}
\def\pa{\partial}
\def\RR{\mathbb{R}}
\def\ZZ{\mathbb{Z}}
\def\NN{\mathbb{N}}
\def\TT{\mathbb{T}}
\def\<{ \langle}
\def\>{ \rangle}
\DeclareMathOperator{\RE}{Re}
\DeclareMathOperator{\IM}{Im}
\def\un{{\mathbbmss{1}}}
\begin{document}
\title{Nonlinear instability of inhomogeneous steady states solutions to the HMF Model}
\author[$\dagger$]{M. Lemou}
\author[$\star$]{A. M. Luz}
\author[$\ddag$]{F. M\'ehats}

\affil[$\dagger$]{Univ Rennes, CNRS, IRMAR, {mohammed.lemou@univ-rennes1.fr}}
\affil[$\star$]{IME, Universidade Federal Fluminense, {analuz@id.uff.br}}
\affil[$\ddag$]{Univ Rennes, IRMAR, {florian.mehats@univ-rennes1.fr}}

\maketitle
\begin{abstract} 
In this work we prove the nonlinear instability of inhomogeneous steady states solutions to the Hamiltonian Mean Field (HMF) model. We first study the linear instability of this model under a simple criterion by adapting the techniques developed in \cite{LIN1}. In a second part, we extend to the inhomogeneous case some techniques developed in \cite{grenier, Han-Hauray, Han-Nguyen} and prove a nonlinear instability result under the same criterion.
\end{abstract}
\section{Introduction}

In this paper, we are interested in the nonlinear instability of inhomogeneous steady states  of the Hamiltonian Mean Field (HMF) system. The HMF system is a kinetic model describing particles moving on a unit circle interacting via an infinite range attractive cosine potential. This 1D model holds many qualitative properties of more realistic long-range interacting systems as the Vlasov-Poisson model. The HMF model has been the subject of many works in the physical community, for the study of non equilibrium phase transitions \cite{chavanis1,chavanis2,antoniazzi,ogawa2}, of travelling clusters \cite{yamaguchi1,yamaguchi2} or of relaxation processes \cite{barre1,barre2,chavanis4}. 
The long-time validity of the N-particle approximation for the HMF model has been investigated in \cite{rousset1,rousset2} and the Landau-damping phenomenon near a spatially homogeneous state has been studied recently in \cite{faou-rousset}. The formal linear stability of inhomogeneous steady states has been studied in \cite{chavanis3,OGW,yamaguchi3}. In particular, a simple criterion of linear stability has been derived in \cite{OGW}. In \cite{llmehats}, the authors of the present paper have proved that, under the same criterion $\kappa_0<1$ (see below for a precise formulation), the inhomogeneous steady states of HMF that are nonincreasing functions of the microscopic energy are nonlinearly stable. The aim of the present paper is to show, in a certain sense, that this criterion is sharp: we show that if $\kappa_0>1$, the HMF model can develop instabilities, from both the linear and the nonlinear points of view.

In \cite{LIN2}, Guo and Lin have derived a sufficient criterion for linear instability to 3D Vlasov-Poisson by extending an approach developped in \cite{LIN1} for BGK waves. Let us also mention that both works have adapted some techniques presented in \cite{guo-strauss} to prove the nonlinear instability of the 3D Vlasov-Poisson system. In the first part of this article we adapt these techniques and prove the linear instability of nonhomogeneous steady states to the HMF system. In \cite{Han-Hauray}, a nonlinear instability result for 1D Vlasov-Poisson equation was obtained for an initial data close to stationary homogeneous profiles that satisfy a Penrose instability criterion by using an approach developed in \cite{grenier}. In \cite{Han-Nguyen}, starting with the N- particles version of the HMF model,  a nonlinear instability result is obtained for the corresponding Vlasov approximation by also considering a Penrose instability condition for stationary homogeneous profiles. In the second part of this article our aim is to prove the nonlinear instability of non-homogeneous steady states  of HMF by adapting the techniques developed in \cite{grenier, Han-Hauray, Han-Nguyen}.

In the HMF model, the distribution function of particles $f(t,\theta,v)$ solves the initial-valued problem
\begin{align}\label{sis1}
&\partial_t f+v\partial_{\theta} f-\partial_{\theta} \phi_f\partial_v f=0,\qquad(t,\theta,v)\in\Real_{+}\times\TT\times\Real,\\
&f(0,\theta,v)=f_{\rm init}(\theta,v)\geq 0,\nonumber
\end{align}
where $\TT$ is the flat torus $[0,2\pi]$ and where the self-consistent potential $\phi_f$ associated with a distribution function $f$ is defined by
\begin{equation}
\label{phi}
 \phi_f(\theta)=-\int^{2 \pi}_{0}\rho_f(\theta')\cos(\theta-\theta')d\theta', \qquad \rho_f(\theta)=\int_{\Real}f(\theta,v)dv.
\end{equation}

Introducing the so-called magnetization vector defined by
\begin{equation}\label{mag2}
M_f=\int^{2 \pi}_{0}\rho_f(\theta)u(\theta) d\theta, \qquad \mbox{with}\quad u(\theta)=(\cos \theta,\sin \theta)^T
\end{equation}
we have
\begin{equation}\label{phimag}
 \phi_f(\theta)=-M_f\cdot u(\theta).
\end{equation}

In this work will consider steady states of \eqref{sis1} of the form
\begin{equation}\label{Qexp}
f_0(\theta,v)=F(e_0(\theta,v)),
\end{equation}
where F is a given nonnegative function and where the microscopic energy $e_0(\theta,v)$ is given by 
\begin{equation}\label{eqe0}
e_0(\theta,v)=\frac{v^2}{2}+\phi_0(\theta) \quad \text{with}\,\,\phi_0=\phi_{f_0}.
\end{equation}
Without loss of generality, we assume that $\phi_0(\theta)=-m_0\cos \theta$  with $m_0>0$. Here $m_0$ is the magnetization of the stationary state $f_0$ defined by $m_0=\int \rho_{f_0}\cos \theta d\theta$.

It is shown in \cite{llmehats} that (essentially) if $F$ is decreasing then $f_0$ is nonlinearly stable by the  HMF flow \eqref{sis1} provided that the criterion $\kappa_0<1$ is satisfied, where $\kappa_0$ is given by 
\begin{equation}
\label{kappa0}
\kappa_0=-\int^{2 \pi}_{0}\!\!\int^{+\infty}_{-\infty} F'\!\left(e_0(\theta,v)\right)\left(\frac{\ds \int_{\mathcal D_{e_0(\theta,v)}}(\cos \theta-\cos \theta')(e_0(\theta,v)+m_0\cos \theta')^{-1/2}d\theta'}{\ds\int_{\mathcal D_{e_0(\theta,v)}}(e_0(\theta,v)+m_0\cos \theta')^{-1/2}d\theta'} \right)^2d\theta dv,
\end{equation}
with $${\mathcal D_e}=\left\{\theta'\in \TT\,:\,\,m_0\cos \theta'>-e\right\}.$$

In this paper, we explore situations where this criterion is not satisfied, i.e when $\kappa_0>1$. Let us now state our two main results. The first one concerns the linearized  HMF equation given by
\begin{equation}\label{sislin}
\partial_t f=Lf,
\end{equation}
where 
\begin{equation}\label{defL}
Lf:=-v\partial_{\theta} f+\partial_{\theta} \phi_0\partial_v f+\partial_{\theta} \phi_f\partial_{v} f_0.
\end{equation}

\begin{theorem}[Linear instability]
\label{mainthm1}
Let $f_0\in L^1(\TT,\RR)$ be a stationary solution of \eqref{sis1} of the form \eqref{Qexp}, where $F$ is a nonnegative $\mathcal C^1$ function on $\RR$ such that $F'(e_0(\theta,v))$ belongs  to $L^1(\TT,\RR)$. Assume that $\kappa_0>1$,
where $\kappa_0$ is given by \eqref{kappa0}. Then there exists ${\lambda}>0$ and  a non-zero $f \in L^1(\TT\times\RR)$   such that $e^{\lambda t}f$ is a nontrivial growing mode weak solution to the linearized HMF equation \eqref{sislin}.
% with  $f(\theta,v)$ given by
%\begin{equation}\label{fdef}
%f(\theta,v)=F'(e)\phi_f\left(\theta\right)-F'(e)\int_{-\infty}^{0}\tilde{\lambda} e^{\tilde{\lambda} s}\left(\phi_f({\Theta}(s))\right)ds.
%\end{equation}
% and $\phi_f(\theta)=\phi(\theta)=-m\cos\theta$ with $m>0$ .
\end{theorem}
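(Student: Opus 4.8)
The plan is to transform the eigenvalue equation $\lambda f=Lf$ into a scalar ``dispersion relation'' $\mathcal F(\lambda)=1$ for an explicit real function $\mathcal F$ on $(0,\infty)$, to show that $\mathcal F(0^+)=\kappa_0$ and $\mathcal F(+\infty)=0$, and then to invoke the intermediate value theorem; this follows the spirit of \cite{LIN1,LIN2}, simplified by the fact that the HMF potential lives in the two--dimensional space spanned by $\cos\theta,\sin\theta$.

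\emph{Step 1 (reduction to a scalar relation).} Write $\mathcal T:=v\partial_\theta-m_0\sin\theta\,\partial_v$ for the transport field of $e_0$, so that $\mathcal T e_0=0$ and, using $\partial_\theta\phi_0=m_0\sin\theta$ and $\partial_v f_0=vF'(e_0)$, $\,Lf=-\mathcal T f+vF'(e_0)\,\partial_\theta\phi_f$. A growing mode $e^{\lambda t}f$ with $\lambda>0$ solves $(\lambda+\mathcal T)f=vF'(e_0)\,\partial_\theta\phi_f$. I would look for $f$ whose self--consistent potential has the particular form $\phi_f(\theta)=-c\cos\theta$, $c\ne0$: since $e_0$ is invariant under $(\theta,v)\mapsto(-\theta,-v)$, the source $c\,vF'(e_0)\sin\theta$ is even under this involution, hence so is $f$, hence $\rho_f$ is even in $\theta$ and the $\sin\theta$--component of $\phi_f$ vanishes automatically — the ansatz is consistent. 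Using $\mathcal T\cos\theta=-v\sin\theta$, and therefore $\mathcal T\big(F'(e_0)\cos\theta\big)=-vF'(e_0)\sin\theta$ (which holds in $\mathcal D'$ even when $F$ is only $\mathcal C^1$, by differentiating along the flow of $\mathcal T$), one checks that
$$f=c\Big(-F'(e_0)\cos\theta+\lambda\,(\lambda+\mathcal T)^{-1}\big[F'(e_0)\cos\theta\big]\Big)$$
satisfies $(\lambda+\mathcal T)f=c\,vF'(e_0)\sin\theta=vF'(e_0)\,\partial_\theta\phi_f$. The sole remaining condition is that the $\cos\theta$--coefficient of $\rho_f$ equal $c$, i.e. $\iint f\cos\theta\,d\theta\,dv=c$; dividing by $c$ this is exactly $\mathcal F(\lambda)=1$, with
$$\mathcal F(\lambda):=-\iint F'(e_0)\cos^2\theta\,d\theta\,dv+\lambda\iint\cos\theta\,(\lambda+\mathcal T)^{-1}\big[F'(e_0)\cos\theta\big]\,d\theta\,dv.$$

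\emph{Step 2 (the resolvent and a Laplace form of $\mathcal F$).} Next I would realise $(\lambda+\mathcal T)^{-1}$ through the measure--preserving (pendulum) Hamiltonian flow $\Phi_s$ of $e_0$: $(\lambda+\mathcal T)^{-1}g=\int_0^{\infty}e^{-\lambda s}\,g\circ\Phi_{-s}\,ds$, which is bounded on $L^1(\TT\times\RR)$ for $\lambda>0$ and satisfies $(\lambda+\mathcal T)(\lambda+\mathcal T)^{-1}g=g$ in $\mathcal D'$. Hence $F'(e_0)\in L^1$ forces $f\in L^1(\TT\times\RR)$ and $e^{\lambda t}f$ is a weak solution of \eqref{sislin}. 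Since $e_0\circ\Phi_{-s}=e_0$,
$$\mathcal F(\lambda)=\lambda\int_0^{\infty}e^{-\lambda s}\big(A(s)-A(0)\big)\,ds,\qquad A(s):=\iint F'(e_0)\cos\theta\,\cos\theta_{-s}\,d\theta\,dv,$$
where $\theta_{-s}$ is the angular component of $\Phi_{-s}$. The function $A$ is even, continuous, and bounded by $\|F'(e_0)\|_{L^1}$, so dominated convergence gives that $\mathcal F$ is continuous on $(0,\infty)$ and that $\mathcal F(\lambda)\to 0$ as $\lambda\to+\infty$ (put $s=u/\lambda$).

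\emph{Step 3 (the limit $\lambda\to0^+$ and conclusion).} This is the key point. Because $A$ is bounded and the flow is periodic on every orbit away from the separatrix and the two equilibria (a null set), the Cesàro mean $\frac1T\int_0^TA(s)\,ds$ converges; by dominated convergence its limit is $\iint F'(e_0)\cos\theta\,\overline{\cos\theta}^{\,e_0}\,d\theta\,dv$, where $\overline{\cos\theta}^{\,e}$ is the orbital average $\big(\int_{\mathcal D_e}\cos\theta'(e+m_0\cos\theta')^{-1/2}d\theta'\big)\big/\big(\int_{\mathcal D_e}(e+m_0\cos\theta')^{-1/2}d\theta'\big)$. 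An Abelian theorem then yields $\lim_{\lambda\to0^+}\lambda\int_0^\infty e^{-\lambda s}A(s)\,ds=\iint F'(e_0)\cos\theta\,\overline{\cos\theta}^{\,e_0}$, whence
$$\lim_{\lambda\to0^+}\mathcal F(\lambda)=-\iint F'(e_0)\cos^2\theta+\iint F'(e_0)\big(\overline{\cos\theta}^{\,e_0}\big)^2=\kappa_0,$$
the last equality being the algebraic identity obtained by expanding the square in \eqref{kappa0} and using $\int_{\mathrm{orbit}}\cos\theta\,\tfrac{d\theta}{|v|}=\overline{\cos\theta}^{\,e}\int_{\mathrm{orbit}}\tfrac{d\theta}{|v|}$. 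Since $\kappa_0>1>0=\mathcal F(+\infty)$ and $\mathcal F$ is continuous, the intermediate value theorem provides $\lambda>0$ with $\mathcal F(\lambda)=1$; the corresponding $f$ (take $c=1$) is non--zero — otherwise $\phi_f\equiv0$ contradicts $\phi_f=-\cos\theta$ — and $e^{\lambda t}f$ is the desired growing mode.

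\emph{Expected main obstacle.} The delicate step is the identification $\mathcal F(0^+)=\kappa_0$: it combines an Abelian/Tauberian passage from the Laplace average to the Cesàro (time) average, the ergodic identification of that average with orbital averages — which needs genuine care near the separatrix and the equilibria, where periods blow up, though these form a null set absorbed by dominated convergence — and the algebraic matching with the integrand defining $\kappa_0$. The remaining ingredients (the parity reduction collapsing the problem to the scalar relation $\mathcal F(\lambda)=1$, the $L^1$--boundedness and resolvent identity for $(\lambda+\mathcal T)^{-1}$, and the continuity and $+\infty$--behaviour of $\mathcal F$) are routine. Note that the argument requires no monotonicity of $F$, only $F\in\mathcal C^1$ with $F'(e_0)\in L^1$, exactly as assumed.
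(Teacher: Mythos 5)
Your proposal is correct and follows essentially the same route as the paper: your dispersion function satisfies $\mathcal F(\lambda)=1-G(\lambda)$ for the paper's $G$, your ansatz for $f$ is exactly the paper's eigenfunction \eqref{flem2}, and the two limits $\mathcal F(0^+)=\kappa_0$, $\mathcal F(+\infty)=0$ plus the intermediate value theorem reproduce Lemmas \ref{lem1}--\ref{lim2G}. The only (harmless) presentational differences are that you phrase the characteristic integral as a resolvent $(\lambda+\mathcal T)^{-1}$ and obtain the $\lambda\to0^+$ limit via a Ces\`aro average and an Abelian theorem, where the paper sums the geometric series over periods of the pendulum flow directly.
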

Our second result is the following nonlinear instability theorem.

\begin{theorem}[Nonlinear instability] \label{mainthm2}
Let $f_0$ be a stationary solution of \eqref{sis1} of the form \eqref{Qexp}, where $F$ is a $\mathcal {C} ^{\infty}$ function on $\RR$, such that $F(e)>0$ for $e<e_*$, $F(e)=0$ for $e\geq e_*$, with $e_*<m_0$ and $|F'(e)|\leq C|e_*-e|^{-\alpha}F(e)$ in the neighborhood of $e_*$, for some $\alpha\geq 1$. Assume that $\kappa_0>1$,
where $\kappa_0$ is given by \eqref{kappa0}. Then $f_0$ is nonlinearly unstable in $L^1(\TT\times\Real)$, namely, there exists $\delta_0>0$  such that for any $\delta>0$ there exists a nonnegative solution $f(t)$ of \eqref{sis1} satisfying $\| f(0)-f_0\|_{L^1}\leq \delta$ and
$$ \| f(t_{\delta})-f_0\|_{L^1}\geq \delta_0,$$
with $t_{\delta}=O(|\log \delta|)$ as $\delta \rightarrow 0$.
\end{theorem}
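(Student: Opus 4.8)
The plan is to promote the spectral instability of Theorem~\ref{mainthm1} to a nonlinear one by a WKB/approximate--solution argument in the spirit of \cite{grenier,Han-Hauray,Han-Nguyen}, exploiting that $\operatorname{supp}f_0\subset\{e_0\le e_*\}$ is a compact set of librating orbits, bounded away from the separatrix since $e_*<m_0$. First I would fix the growing mode: let $f_1\neq0$ be a real eigenfunction of $L$ with eigenvalue $\lambda>0$ (the real or imaginary part of the mode from Theorem~\ref{mainthm1}), chosen with $\lambda$ maximal --- such a maximizer exists because $L=\{e_0,\cdot\}+P$ with $P:g\mapsto(\partial_\theta\phi_g)\partial_v f_0$ of rank two, so $e^{tL}-e^{t\{e_0,\cdot\}}$ is compact and $L$ has only finitely many eigenvalues in any half--plane $\{\RE z\ge\eta>0\}$. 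Rewriting the eigenvalue equation as $(v\partial_\theta-\partial_\theta\phi_0\partial_v+\lambda)f_1=\partial_\theta\phi_{f_1}\partial_v f_0$ and inverting the transport operator along the (mildly behaved) librating flow of $e_0$ shows that $f_1$ is supported in $\{e_0\le e_*\}$ and is controlled by $\partial_v f_0$ in the $L^1$--type norm permitted by the hypothesis $|F'|\le C|e_*-e|^{-\alpha}F$. An elementary fact, used only at the end, is that $M_{f_1}\neq0$: otherwise $e^{\lambda t}f_1$ would weakly solve free transport by the measure--preserving $e_0$--flow, forcing $e^{\lambda t}\|f_1\|_{L^1}=\|f_1\|_{L^1}$ and hence $f_1\equiv0$.

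Next, writing $\tilde f=f-f_0$ so that $\partial_t\tilde f=L\tilde f+\partial_\theta\phi_{\tilde f}\partial_v\tilde f$, I would construct $f^{\mathrm{app}}_N=f_0+\sum_{j=1}^N\delta^j f_j$, with $f_j$ ($j\ge2$) solving $\partial_t f_j-Lf_j=\sum_{a+b=j}\partial_\theta\phi_{f_a}\partial_v f_b$ and $f_j(0)=0$; each $f_j$ grows like $e^{j\lambda t}$ (up to polynomial factors if $j\lambda\in\sigma(L)$, removed by a small perturbation of $\lambda$) and the residual $R^N=\partial_t f^{\mathrm{app}}_N+v\partial_\theta f^{\mathrm{app}}_N-\partial_\theta\phi_{f^{\mathrm{app}}_N}\partial_v f^{\mathrm{app}}_N$ is $O((\delta e^{\lambda t})^{N+1})$. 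Each corrector costs one velocity derivative of $f_0$, so only boundedly many are available under the stated regularity; but with $\lambda$ maximal already $N=1$ suffices, so effectively $f^{\mathrm{app}}=f_0+\delta e^{\lambda t}f_1$. To obtain a \emph{nonnegative} solution I would not perturb $f_0$ by $f^{\mathrm{app}}_N(0)$ (which may be negative near $\partial\{e_0\le e_*\}$) but take $f(0)=f_0\circ\Xi_\delta$, with $\Xi_\delta$ the time-$\delta$ flow of a Hamiltonian field $X_h$: then $f(0)\ge0$ and $\|f(0)-f_0\|_{L^1}=O(\delta)$ automatically, with $f(0)-f_0=\delta\{h,f_0\}+O(\delta^2)$. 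Here $h$ is chosen so that $\{h,f_0\}=F'(e_0)\{h,e_0\}$ pairs nontrivially with the adjoint growing mode --- possible because an integration by parts gives $\int G'(f_0)\,Lg\,\dov=0$ for every test function $g$ and every $G$, so the growing mode is orthogonal to all linearized Casimir densities $G'(f_0)$, precisely the obstruction to being a bracket with $f_0$. The hypothesis $|F'|\le C|e_*-e|^{-\alpha}F$ is what keeps $\{h,f_0\}$ and the $O(\delta^2)$ remainder of $f_0\circ\Xi_\delta$ controlled near the support boundary. Since the HMF flow is measure preserving, the solution stays nonnegative: $f(t)=f(0)\circ\Phi_f^{-t}\ge0$ for all $t$.

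The core of the argument --- and its main obstacle --- is the error estimate. Put $g=f-f^{\mathrm{app}}_N$; subtracting the equations, $g$ is transported by the exact Hamiltonian flow $\Phi_f$ of $v^2/2+\phi_f$ with source $\partial_\theta\phi_g\partial_v f^{\mathrm{app}}_N-R^N$, which carries no derivative of $g$. A naive Gronwall estimate fails, because the piece $\partial_\theta\phi_g\partial_v f_0$ of this source is exactly the bounded rank--two term responsible for the instability and would produce a spurious factor $e^{Ct}$ with $C=\|P\|$ rather than $\lambda$. Instead I would run Duhamel against $e^{tL}$ and use the bound $\|e^{tL}\|_{\mathcal{B}(X)}\le M_\varepsilon e^{(\lambda+\varepsilon)t}$ on a suitable weighted space $X\hookrightarrow L^1$ (with the weight $1+v^2$, which is almost conserved by the $e_0$--flow, so that $g\mapsto M_g$ is bounded on $X$ and $e^{t\{e_0,\cdot\}}$ is uniformly bounded on $X$): this holds because $\sigma_{\mathrm{ess}}(e^{tL})=\sigma_{\mathrm{ess}}(e^{t\{e_0,\cdot\}})$ lies in a fixed annulus while the rest of the spectrum consists of isolated eigenvalues $e^{t\nu}$, so the spectral radius of $e^{tL}$ equals $e^{\lambda t}$ by maximality of $\lambda$. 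One then runs a bootstrap on $\|g(t)\|_X\le A(\delta e^{\lambda t})^{N+1}$, valid as long as $\delta e^{\lambda t}\le\varepsilon_0$: the self--interaction and the interaction with $f^{\mathrm{app}}_N-f_0\sim\delta e^{\lambda t}$ carry small coefficients, the residual integrates to $O((\delta e^{\lambda t})^{N+1})$, and the genuinely quadratic term $\partial_\theta\phi_g\partial_v g$ --- which would lose a velocity derivative against $e^{tL}$ --- is instead kept inside the exact transport by $\Phi_f$, where it costs nothing. Finally, for $t_\delta=\lambda^{-1}\log(\varepsilon_0/\delta)=O(|\log\delta|)$ one gets $f(t_\delta)=f_0+\varepsilon_0 f_1+o(\varepsilon_0)$ in $L^1$, whence $\|f(t_\delta)-f_0\|_{L^1}\ge|M_{f(t_\delta)}-M_{f_0}|\ge\tfrac12\varepsilon_0|M_{f_1}|=:\delta_0>0$ for $\varepsilon_0$ small, with $\delta_0$ independent of $\delta$. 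The hardest point is to make the semigroup estimate and the transport estimate coexist in a single norm, compatible both with the positivity--preserving, $L^1$--isometric flow (which loses velocity derivatives) and with the sharp exponential bound on $e^{tL}$.
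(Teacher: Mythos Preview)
Your overall architecture --- Grenier correctors, a sharp semigroup bound on $e^{tL}$, then an error estimate --- is the paper's. The gap is precisely the tension you name in your last sentence but do not resolve. The claim that ``with $\lambda$ maximal already $N=1$ suffices'' fails: writing $\partial_t h-Lh=S$ forces $S$ to contain the terms $\partial_\theta\phi_{f^{\mathrm{app}}-f_0}\,\partial_v h$ and $\partial_\theta\phi_h\,\partial_v h$, which lose a velocity derivative and do not close against the sharp $e^{tL}$ bound in any fixed $W^{k,1}$; writing instead $(\partial_t+v\partial_\theta+E_f\partial_v)h=-E_h\,\partial_v f^{\mathrm{app}}_N-R_N$ avoids the loss, but the measure-preserving transport along $\Phi_f$ gives only the naive Gronwall rate $\|\partial_v f^{\mathrm{app}}_N\|_{L^1}\approx\|\partial_v f_0\|_{L^1}+1$, unrelated to $\RE\gamma$. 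Your proposal to ``keep the quadratic term inside the exact transport'' while running Duhamel against $e^{tL}$ for the rest is not a single coherent estimate: these are two mutually exclusive decompositions of the same equation.

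The paper accepts the naive transport estimate for $h$ and compensates with a \emph{large} $N$: it fixes
\[
N=\left\lfloor\frac{\|\partial_v f_0\|_{L^1}+1}{\RE\gamma}\right\rfloor+1,
\]
so that the residual bound $\|R_N\|_{L^1}\le C(\delta e^{t\RE\gamma})^{N+1}$ beats the Gronwall factor $e^{t(\|\partial_v f_0\|_{L^1}+1)}$. The sharp bound on $e^{tL}$ --- obtained via Shizuta's compact-perturbation theory, using that on $\{e_0\le e_*\}$ with $e_*<m_0$ the characteristics are periodic with smooth, uniformly bounded period and hence have polynomially-growing derivatives --- is applied \emph{only} to the correctors $f_k$, estimated inductively in $W^{N-k+1,1}$, since their equations \eqref{eqfk} have smooth sources free of $\partial_v f_k$. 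Your remark that ``only boundedly many correctors are available under the stated regularity'' is also mistaken: $F\in\mathcal C^\infty$, so $f_0\in W^{N,1}$ for every $N$, and as many correctors as needed are available.

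A secondary difference is the positivity device. The paper does not compose $f_0$ with a Hamiltonian flow; it multiplies the eigenfunction $g$ by a cutoff $\chi_\delta(e_0)$ and uses the explicit integral form of $g$ (which gives $|g|\le C|F'(e_0)|$) together with the hypothesis $|F'(e)|\le C|e_*-e|^{-\alpha}F(e)$ and a matching choice $\chi_\delta(e)\le C|e_*-e|^\alpha\delta^{-1/2}$ to obtain $\delta|\RE g\,\chi_\delta|\le C\delta^{1/2}f_0$ directly. This produces a small extra residual $\widetilde R_\delta$ in the $f_1$ equation, controlled separately by taking $\delta$ small at the end.
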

\begin{remark}
Note that in these two theorems we do not assume that the profile $F$ is a decreasing function. Besides, the set of steady states satisfying the assumptions of these theorems is not empty,  as proved in the Appendix (see Lemma \ref{lemapp}).
Note also that the instability of Theorem \ref{mainthm2} is not due to the usual orbital instability. Indeed the functional space of the pertubation can be restricted to the space of even functions in $(\theta,v)$.
\end{remark}
The outline of the paper is as follows: Sections \ref{section2} and \ref{section3} are respectively devoted to the proofs of Theorem \ref{mainthm1} and Theorem \ref{mainthm2}.

\section{A linear instability result: proof of Theorem \ref{mainthm1}}
\label{section2}
The aim of this section is to prove Theorem \ref{mainthm1}. This proof will be done following the framework used by Lin for the study of periodic BGK waves in \cite{LIN1},  which was generalized  to the analysis of instabilities for the 3D Vlasov-Poisson system by Guo and Lin in \cite{LIN2}. We divide this proof into the three Lemmas \ref{lem1}, \ref{lim1G} and \ref{lim2G}, respectively proved in Subsections \ref{fam}, Subsection \ref{lim0} and Subsection \ref{liminfty}.
 
 \bs
A growing mode of \eqref{sislin} is a solution of the form $e^{\lambda t}f$, where $f\in L^1(\TT,\RR)$ is an unstable eigenfunction of $L$, i.e. a nonzero function satisfying $Lf=\lambda f$ in the sense of distributions, with  $\lambda \in \RR_+^*$ and with $L$ defined by \eqref{defL}. Note that the equation $Lf-\lambda f=0$ is invariant by translation: if $f(\theta,v)$ is an eigenfunction, then for all $\theta_0$, $f(\theta+\theta_0,v)$ is also an eigenfunction. Since for all $f\in L^1$ we can find a $\theta_0\in \TT$ such that $\int \rho_f(\theta+\theta_0)\sin \theta d\theta=0$, we can assume that our eigenfunction of $L$ always satisfy  $\int \rho_f\sin \theta d\theta=0$, i.e. $\phi_f=-m\cos \theta$ with $m=\int\rho_f\cos \theta d\theta$.

Let us first define $\left(\Theta(s,\theta,v),V(s,\theta,v)\right)$ as the solution of the characteristics problem
\begin{equation}\label{siscarac}
\left\{ \begin{array}{l}
\ds\frac{d\Theta(s,\theta,v)}{ds}=V(s,\theta,v)\\[2mm]
\ds\frac{dV(s,\theta,v)}{ds}=-\partial_{\theta}\phi_0\left(\Theta(s,\theta,v)\right)
\end{array} \right. 
\end{equation}
with initial data $\Theta(0,\theta,v)=\theta$, $V(0,\theta,v)=v$. When there is no ambiguity, we denote simply $\Theta(s)=\Theta(s,\theta,v)$ and $V(s)=V(s,\theta,v)$. Since $\phi_0(\theta)=-m_0\cos \theta$, the solution $(\Theta, V)$ is globally defined and belongs to $\mathcal C^\infty(\RR\times\TT\times\RR)$. Note that the energy $e_0\left(\Theta(s),V(s)\right)=\frac{V(s)^2}{2}+\phi_0(\Theta(s))$ does not depend on $s$.

We shall reduce the existence of a growing mode of \eqref{sislin} to the existence of a zero of the following function, defined for all $\lambda\in \RR_+^*$:
\begin{align}
G(\lambda) = &1+\int_{0}^{2 \pi}\int_{\Real}F'\left(\frac{v^2}{2}-m_0\cos\theta\right) \cos^2\theta d\theta dv\nonumber\\
&\quad -\int_{0}^{2 \pi}\int_{\Real}F'\left(\frac{v^2}{2}-m_0\cos\theta\right)\left(\int_{-\infty}^{0}\lambda e^{\lambda s}\cos{\Theta}(s,\theta,v)ds\right) \cos \theta d\theta dv.\label{glambda}
\end{align}

\begin{lemma}
\label{lem1}
Let $f_0\in L^1(\TT,\RR)$ be a stationary solution of \eqref{sis1} of the form \eqref{Qexp}, where $F$ is a $\mathcal C^1$ function on $\RR$ such that $F'(e_0(\theta,v))$ belongs  to $L^1(\TT,\RR)$. Then the function $G$ defined by \eqref{glambda} is well-defined and continuous on $\RR_+^*$.
Moreover, there exists a growing mode $e^{\lambda t}f$ solution to \eqref{sislin} associated with the eigenvalue $\lambda>0$ if and only if $G({\lambda})=0$. An unstable eigenfunction $f$ of $L$ is defined by
\begin{equation}\label{flem2}
f(\theta,v)=-F'(e_0(\theta,v))\cos\theta+F'(e_0(\theta,v))\int_{-\infty}^{0}\lambda e^{\lambda s}\cos\left({\Theta}(s,\theta,v)\right)ds.
\end{equation}
\end{lemma}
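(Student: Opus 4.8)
The strategy is the one of Lin: invert the transport part of $L$ along the characteristic flow \eqref{siscarac} and reduce the eigenvalue problem $Lf=\lambda f$ to the scalar relation $G(\lambda)=0$. Write $Tf:=v\partial_\theta f-\partial_\theta\phi_0\,\partial_v f$ for the (divergence-free) transport operator associated with \eqref{siscarac}, so that $L=-T+R$ with $Rf:=\partial_\theta\phi_f\,\partial_v f_0$, and note that $\frac{d}{ds}\,u\bigl(\Theta(s),V(s)\bigr)=(Tu)\bigl(\Theta(s),V(s)\bigr)$ for smooth $u$. Using the normalization $\int\rho_f\sin\theta\,d\theta=0$ (legitimate by translation, as explained above), we have $\phi_f=-m\cos\theta$ with $m=\int\rho_f\cos\theta\,d\theta$, and since $\partial_vf_0=vF'(e_0)$ the equation $Lf=\lambda f$ reads $(T+\lambda)f=m\,v\sin\theta\,F'(e_0)$. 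Because $e_0$ is conserved by the flow, $F'(e_0)$ is a distributional invariant of $T$ (which follows from the fact that $s\mapsto\int\int F'(e_0)\,\psi(\Theta(s),V(s))\,d\theta dv$ is affine and bounded for every $\psi\in C_c^\infty$), hence, by the Leibniz rule, $T\bigl(F'(e_0)\cos\theta\bigr)=-v\sin\theta\,F'(e_0)$ in $\mathcal D'$. Therefore $g:=f+mF'(e_0)\cos\theta$ solves $(T+\lambda)g=\lambda m\,F'(e_0)\cos\theta$.

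I then invert $T+\lambda$ for $\lambda>0$: the bounded solution of $(T+\lambda)u=h$ is $u(\theta,v)=\int_{-\infty}^0e^{\lambda s}h\bigl(\Theta(s,\theta,v),V(s,\theta,v)\bigr)\,ds$, and it is the unique $L^1$ solution — indeed, if $g_0\in L^1$ solves $(T+\lambda)g_0=0$ then $\frac{d}{dt}\int\int g_0\,\psi(\Theta(t),V(t))\,d\theta dv=\lambda\int\int g_0\,\psi(\Theta(t),V(t))\,d\theta dv$ for all $\psi\in C_c^\infty$, and boundedness in $t$ of the left member (by $\|g_0\|_{L^1}\|\psi\|_{L^\infty}$) forces it to vanish, i.e. $g_0=0$. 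Applying this with $h=\lambda mF'(e_0)\cos\theta$ and using $e_0(\Theta(s),V(s))=e_0(\theta,v)$ gives $g=mF'(e_0)(\theta,v)\int_{-\infty}^0\lambda e^{\lambda s}\cos\Theta(s,\theta,v)\,ds$, so $f$ equals $m$ times the right-hand side of \eqref{flem2}. If $m=0$ then $f\equiv0$; hence $m\neq0$, and the self-consistency relation $m=\int\int f\cos\theta\,d\theta dv$ becomes, after division by $m$ and using $e_0=\frac{v^2}{2}-m_0\cos\theta$, exactly $G(\lambda)=0$. This proves the ``only if'' direction and identifies the eigenfunction \eqref{flem2} (normalized by $m=1$). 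The well-definedness and continuity of $G$ on $\RR_+^*$ follow by dominated convergence, using $\bigl|\int_{-\infty}^0\lambda e^{\lambda s}\cos\Theta(s)\,ds\bigr|\le1$ uniformly in $\lambda$, so that the integrand of \eqref{glambda} is dominated by $2|F'(e_0)|\in L^1$.

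For the converse, assume $G(\lambda)=0$ and let $f$ be given by \eqref{flem2}. Then $|f|\le2|F'(e_0)|$ gives $f\in L^1$; the identity $\int\int f\cos\theta\,d\theta dv=1$ (which is just $G(\lambda)=0$ rewritten) gives $f\neq0$; and the symmetry $(\theta,v)\mapsto(-\theta,-v)$ of $e_0$ and of the flow gives $\int\rho_f\sin\theta\,d\theta=0$, hence $\phi_f=-\cos\theta$. It remains to check that $Lf=\lambda f$ in $\mathcal D'$, which reduces to proving $(T+\lambda)g=\lambda F'(e_0)\cos\theta$ in $\mathcal D'$ for $g:=f+F'(e_0)\cos\theta=F'(e_0)\int_{-\infty}^0\lambda e^{\lambda s}\cos\Theta(s)\,ds$; combined with $T(F'(e_0)\cos\theta)=-v\sin\theta F'(e_0)$ this yields $(T+\lambda)f=v\sin\theta F'(e_0)=\partial_\theta\phi_f\,\partial_vf_0=Rf$, i.e. $Lf=\lambda f$. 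To prove the identity for $g$, I test against $\varphi\in C_c^\infty$, use Fubini and the measure-preserving change of variables $(\theta,v)\mapsto(\Theta(s,\theta,v),V(s,\theta,v))$ to rewrite $\int\int g\,T\varphi\,d\theta dv$ as $\int_0^{+\infty}\lambda e^{-\lambda\tau}h'(\tau)\,d\tau$, where $h(\tau):=\int\int F'(e_0)\cos\theta\;\varphi\bigl(\Theta(\tau,\theta,v),V(\tau,\theta,v)\bigr)\,d\theta dv$, and then integrate by parts in $\tau$ (the boundary term at $+\infty$ vanishes since $|h(\tau)|\le\|F'(e_0)\|_{L^1}\|\varphi\|_{L^\infty}$); recognizing $h(0)=\int\int F'(e_0)\cos\theta\,\varphi\,d\theta dv$ and $\lambda\int_0^{+\infty}e^{-\lambda\tau}h(\tau)\,d\tau=\int\int g\varphi\,d\theta dv$ (same change of variables) then gives precisely $-\int\int g\,T\varphi+\lambda\int\int g\varphi=\lambda\int\int F'(e_0)\cos\theta\,\varphi$.

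The main difficulty is that, since $F$ is only $\mathcal C^1$, the function $F'(e_0)$ is merely continuous and the map $(\theta,v)\mapsto\int_{-\infty}^0\lambda e^{\lambda s}\cos\Theta(s,\theta,v)\,ds$ need not be differentiable — its $\theta$-derivative involves $\partial_\theta\Theta(s)$, which is unbounded near the pendulum separatrix — so \eqref{flem2} cannot be differentiated pointwise, and every transport identity (the invariance of $F'(e_0)$, the computation of $T(F'(e_0)\cos\theta)$, the inversion formula for $T+\lambda$ and, above all, its $L^1$-uniqueness) must be established weakly through the characteristic flow and its volume-preserving property. It is precisely this $L^1$-uniqueness that upgrades the mere construction of \eqref{flem2} into the equivalence stated in the lemma.
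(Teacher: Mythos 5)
Your proof is correct and follows essentially the same route as the paper: Duhamel along the pendulum characteristics to obtain the representation formula \eqref{flem2} (up to the factor $m$), integration against $\cos\theta$ to reduce the eigenvalue problem to $mG(\lambda)=0$ with $m\neq 0$, and a weak verification of $Lf=\lambda f$ for the converse, with well-definedness and continuity of $G$ by dominated convergence. The only real difference is how the representation formula is justified in the forward direction: the paper integrates $\frac{d}{ds}\bigl(e^{\lambda s}f(\Theta(s),V(s))\bigr)$ over $[-R,0]$, tests against $\psi$, and lets $R\to\infty$ so that the $e^{-\lambda R}$ factor kills the boundary term, whereas you prove an $L^1$-uniqueness statement for $T+\lambda$ via the volume-preserving flow and identify $f$ with the explicit resolvent of the source term --- both arguments are valid and yield the same conclusion.
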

\begin{lemma} \label{lim1G}
Under the assumptions of Lemma \ref{lem1}, the function $G$ defined by \eqref{glambda} satisfies
\begin{equation}\label{limG}
\lim \limits_{\,\,\lambda\to 0^+}G(\lambda)=1-\kappa_0,
\end{equation}
where $\kappa_0$ is defined by \eqref{kappa0}.
\end{lemma}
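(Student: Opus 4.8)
The plan is to compute the limit of each of the three terms in $G(\lambda)$ as $\lambda\to 0^+$ and recognize the result as $1-\kappa_0$. The first term is the constant $1$, so nothing to do there. The real content is to show that the difference of the last two integrals,
$$
\int_{0}^{2\pi}\!\int_{\Real}F'\!\left(\tfrac{v^2}{2}-m_0\cos\theta\right)\left(\cos^2\theta-\cos\theta\int_{-\infty}^{0}\lambda e^{\lambda s}\cos\Theta(s,\theta,v)\,ds\right)d\theta\,dv,
$$
converges, as $\lambda\to0^+$, to $\kappa_0$ as written in \eqref{kappa0}. The key observation is that $\int_{-\infty}^0\lambda e^{\lambda s}g(s)\,ds$ is a weighted average of $g$ over $(-\infty,0]$ that, as $\lambda\to0^+$, selects the time-average of $g$ along the characteristic flow; concretely, for a fixed $(\theta,v)$ with $F'(e_0)\neq 0$, I would write $\lambda\int_{-\infty}^0 e^{\lambda s}\cos\Theta(s,\theta,v)\,ds = \lambda\int_0^\infty e^{-\lambda s}\cos\Theta(-s,\theta,v)\,ds$ and invoke the ergodic/averaging property of the periodic pendulum flow \eqref{siscarac}: since $e_0(\Theta(s),V(s))$ is conserved and the motion on each energy level (below the separatrix value $m_0$) is periodic with some period $T(e_0)$, the Abelian limit of the exponential average equals the orbital average
$$
\overline{\cos\Theta}(\theta,v):=\frac{1}{T(e_0(\theta,v))}\int_{0}^{T(e_0(\theta,v))}\cos\Theta(s,\theta,v)\,ds.
$$
The final step is then to pass the limit inside the $d\theta\,dv$ integral using dominated convergence — the integrand is bounded by $2|F'(e_0(\theta,v))|\in L^1$ uniformly in $\lambda$, since $|\cos|\le1$ and $\lambda\int_{-\infty}^0 e^{\lambda s}ds=1$ — and to identify the orbital average $\overline{\cos\Theta}$ with the explicit ratio of integrals over $\mathcal D_{e_0}$ appearing in \eqref{kappa0}. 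This identification is a standard action-angle computation: along a fixed energy level $e$, one has $v=\pm\sqrt{2(e+m_0\cos\theta)}$, $ds=d\theta/|v|$, the period is $T(e)=2\int_{\mathcal D_e}(2(e+m_0\cos\theta'))^{-1/2}d\theta'$, and the orbital average of any function $h(\theta)$ equals $\left(\int_{\mathcal D_e}h(\theta')(e+m_0\cos\theta')^{-1/2}d\theta'\right)\big/\left(\int_{\mathcal D_e}(e+m_0\cos\theta')^{-1/2}d\theta'\right)$. Applying this to $h=\cos$ and noticing that, after using $\int\rho_{f_0}\sin\theta\,d\theta=0$ so that $\cos^2\theta$ can be absorbed, the combination $\cos^2\theta-\cos\theta\,\overline{\cos\Theta}$ integrates (after a change of variables to the energy level and using again that the orbital average of the extra $\cos\theta$ factor reproduces $\overline{\cos\Theta}$) to the square $\big(\,\overline{(\cos\theta-\cos\theta')}\,\big)^2$ term in \eqref{kappa0}; more precisely, I would group the $\theta$-integration over each energy shell and use that $\cos^2\theta$ averages to $(\overline{\cos\Theta})^2 + \overline{(\cos\theta-\overline{\cos\Theta})^2}$ is not quite what is needed, so instead I would directly rewrite $\cos^2\theta - \cos\theta\,\overline{\cos\theta} = \cos\theta(\cos\theta-\overline{\cos\theta})$ and then, on each shell, replace the remaining free $\cos\theta$ by its shell-average to land exactly on $(\overline{\cos\theta-\cos\theta'})^2$.

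The main obstacle I anticipate is the rigorous justification of the Abelian limit together with the behavior near the separatrix. On the separatrix energy level $e=m_0$ (reached when $v^2/2 - m_0\cos\theta = m_0$, i.e. at the hyperbolic fixed point $\theta=\pi$) the period $T(e)$ blows up logarithmically, and for energies above $m_0$ the motion is rotational rather than librational, so the "orbital average" must be interpreted via the rotation-type period; one must check that this set still has the averaging property and that the formula for the ratio over $\mathcal D_e$ — where $\mathcal D_e=\TT$ once $e>m_0$ — remains the correct limit. Since the hypothesis only requires $F'(e_0)\in L^1$, the separatrix is a measure-zero set in $(\theta,v)$ and causes no integrability problem, but I would need a clean statement that the pointwise Abelian limit holds for a.e. $(\theta,v)$ — this is where a short lemma on the pendulum flow (periodicity of $\Theta(\cdot,\theta,v)$ off the separatrix, continuity of the period, and the elementary fact that the Abel mean of a bounded periodic function converges to its mean) does all the work. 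Everything else is bookkeeping with the change of variables $(\theta,v)\mapsto(e,\text{angle})$ and dominated convergence.
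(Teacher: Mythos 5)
Your proposal follows essentially the same route as the paper: an Abelian-mean/ergodic limit of $g_\lambda(\theta,v)=\lambda\int_{-\infty}^0 e^{\lambda s}\cos\Theta(s)\,ds$ to the orbital average (which the paper proves by an explicit geometric-series computation exploiting the periodicity of the pendulum flow, treating the rotational case $e_0>m_0$ and the librational case $-m_0<e_0<m_0$ separately), then dominated convergence using $|g_\lambda|\leq 1$ and $F'(e_0)\in L^1$ (the separatrix and $\{e_0\leq -m_0\}$ being null sets), the change of variables $ds=d\theta'/|v|$ identifying the orbital average with the ratio over $\mathcal D_{e_0}$, and finally the projector identity to recognize $1-\kappa_0$. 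One small correction in your last algebraic step: to pass from $\int F'\cos\theta\,(\cos\theta-\Pi_{m_0}\cos)$ to $\int F'(\cos\theta-\Pi_{m_0}\cos)^2$ you should \emph{subtract} the shell-average from the remaining free factor $\cos\theta$ (the discarded term $\int F'\,\Pi_{m_0}(\cos)\,(\cos\theta-\Pi_{m_0}\cos)$ vanishes shell by shell), rather than replace $\cos\theta$ by its shell-average, which would make the integral vanish instead of producing the square.
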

\begin{lemma} \label{lim2G}
Under the assumptions of Lemma \ref{lem1}, the function $G$ defined by \eqref{glambda} satisfies
\begin{equation}\label{limGinfty}
\lim \limits_{\,\,\lambda\to +\infty}G(\lambda)=1.
\end{equation}
\end{lemma}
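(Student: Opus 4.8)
The plan is to show that, as $\lambda\to+\infty$, the last integral in \eqref{glambda} converges to the second one, so that those two terms cancel and only the leading constant $1$ survives.

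The key remark is that $\lambda e^{\lambda s}\,ds$ is a probability measure on $(-\infty,0)$ which concentrates at $s=0$ as $\lambda\to+\infty$. Fix $(\theta,v)$ and perform the change of variables $s=u/\lambda$:
\[
\int_{-\infty}^{0}\lambda e^{\lambda s}\cos\Theta(s,\theta,v)\,ds=\int_{-\infty}^{0}e^{u}\cos\Theta\!\left(\tfrac u\lambda,\theta,v\right)du .
\]
Since $(s,\theta,v)\mapsto\Theta(s,\theta,v)$ is continuous (indeed in $\mathcal C^{\infty}(\RR\times\TT\times\RR)$) and $\Theta(0,\theta,v)=\theta$, for every fixed $u<0$ we have $\Theta(u/\lambda,\theta,v)\to\theta$ as $\lambda\to+\infty$. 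The integrand being bounded by $e^{u}\in L^1((-\infty,0))$, dominated convergence gives, for every $(\theta,v)$,
\[
\int_{-\infty}^{0}\lambda e^{\lambda s}\cos\Theta(s,\theta,v)\,ds\ \xrightarrow[\lambda\to+\infty]{}\ \cos\theta\int_{-\infty}^{0}e^{u}\,du=\cos\theta .
\]

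It remains to pass to the limit in the outer double integral. For every $\lambda>0$ and every $(\theta,v)$,
\[
\left|\int_{-\infty}^{0}\lambda e^{\lambda s}\cos\Theta(s,\theta,v)\,ds\right|\le\int_{-\infty}^{0}\lambda e^{\lambda s}\,ds=1 ,
\]
and $|\cos\theta|\le 1$, so the integrand of the last term in \eqref{glambda} is bounded, uniformly in $\lambda$, by $\bigl|F'\bigl(\tfrac{v^2}2-m_0\cos\theta\bigr)\bigr|=\bigl|F'(e_0(\theta,v))\bigr|$, which lies in $L^1$ by the assumption of Lemma \ref{lem1}. Combining the pointwise limit above with the dominated convergence theorem, the last term of \eqref{glambda} converges, as $\lambda\to+\infty$, to
\[
\int_{0}^{2\pi}\int_{\Real}F'\!\left(\tfrac{v^2}2-m_0\cos\theta\right)\cos^2\theta\,d\theta\,dv ,
\]
which is exactly the second term on the right-hand side of \eqref{glambda}. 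Hence $G(\lambda)\to 1$, i.e. \eqref{limGinfty} holds.

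This argument is essentially routine and I do not expect a genuine obstacle here; the only point requiring a little care is the two successive applications of dominated convergence over the unbounded velocity variable, which is precisely what the integrability hypothesis $F'(e_0)\in L^1$ is there to ensure. If a rate were wanted one could instead use the bound $|\Theta(s,\theta,v)-\theta|\le C(\theta,v)|s|$ read off from the characteristic system \eqref{siscarac} to get the inner integral equal to $\cos\theta+O(1/\lambda)$, but the soft version above suffices for the stated limit.
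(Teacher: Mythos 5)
Your proof is correct and follows essentially the same two-step structure as the paper's: establish the pointwise limit $g_\lambda(\theta,v)\to\cos\theta$ as $\lambda\to+\infty$, then pass to the limit in the outer integral by dominated convergence using $|g_\lambda|\le 1$ and $F'(e_0)\in L^1$. The only difference is cosmetic: the paper obtains the inner limit by integrating by parts and bounding $|V(s)|\le (v^2+4m_0)^{1/2}$ via energy conservation (which gives the explicit $O(\lambda^{-1})$ rate you mention as an aside), whereas you use the rescaling $s=u/\lambda$ together with the continuity of $\Theta$ at $s=0$.
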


\begin{proof}[Proof of Theorem \ref{mainthm1}]
From these three lemmas, it is clear that if $\kappa_0>1$, we have $\lim \limits_{\,\,\lambda\to 0^+}G(\lambda)<0$ and $\lim \limits_{\,\,\lambda\to +\infty}G(\lambda)>0$ so by continuity of $G$, there exists $\lambda>0$ such that $G(\lambda)=0$. This means that there exists a growing mode to \eqref{sislin} and this proves Theorem \ref{mainthm1}.
\end{proof}

\subsection{First properties of the function $G(\lambda)$: proof of Lemma \ref{lem1}}
\label{fam}

In this subsection, we prove Lemma \ref{lem1}. Let $\lambda\in\RR_+^*$. Since, by assumption, the function $F'(e_0(\theta,v))$ belongs to $L^1(\TT\times\RR)$, and since 
$$\forall (\theta,v)\qquad \left|\int_{-\infty}^{0}\lambda e^{\lambda s}\cos({\Theta}(s))ds\right|\leq \int_{-\infty}^{0}\lambda e^{\lambda s}ds=1,$$
both functions
$$F'(e_0(\theta,v))\cos\theta\quad\mbox{and}\quad F'(e_0(\theta,v))\int_{-\infty}^{0}\lambda e^{\lambda s}\cos\left({\Theta}(s,\theta,v)\right)ds$$
belong to $L^1(\TT\times \RR)$, so the function $f$ defined by \eqref{flem2} also belongs to $L^1(\TT\times \RR)$.
Hence, by integrating with respect to $\cos \theta d\theta dv$, we deduce that $G(\lambda)$ is well-defined by \eqref{glambda}. The continuity of $G$ on $\RR_+^*$ stems from dominated convergence.

\bs
Consider now a (nonzero) growing mode $e^{\lambda t}f$ of \eqref{sislin} associated to an eigenvalue $\lambda>0$. Let us prove that $G(\lambda)=0$.
From $Lf=\lambda f$ and \eqref{siscarac}, we get, in the sense of distributions,
\begin{align*}
\frac{d}{ds}\left(e^{\lambda s}f\left(\Theta(s),V(s)\right)\right)&=e^{\lambda s}\phi'_f\left(\Theta(s)\right)V(s)F'(e_0(\Theta,V)).
\end{align*}
Integrating this equation from $-R$ to $ 0$, we get, for almost all $(\theta,v)$ and all $R$,
$$
f(\theta,v)=e^{-\lambda R}f\left(\Theta(-R),V(-R)\right)+F'(e_0)\int_{-R}^0 e^{\lambda s} \phi'_f\left(\Theta(s)\right)V(s)ds,
$$
where we recall that $e_0(\Theta,V)=e_0(\theta,v)$.
We multiply by a test function $\psi(\theta,v) \in C_0^{\infty}(\TT\times\RR)$ and integrate with respect to $(\theta,v)$,
\begin{eqnarray*}
\int_{0}^{2\pi}\int_{\RR} f(\theta,v)\psi(\theta,v)d\theta dv&=&e^{-\lambda R}\int_{0}^{2\pi}\int_{\RR} f\left(\theta,v\right)\psi\left(\Theta(R),V(R)\right)d\theta dv\\
&&+\int_{-R}^0\int_{0}^{2\pi}\int_{\RR} e^{\lambda s}F'(e_0) \phi'_f\left(\Theta(s)\right)V(s)\psi\left(\theta,v\right)dsd\theta dv. 
\end{eqnarray*}
In the first integral of the right-hand side, we have performed the change of variable $(\theta,v)=\left (\Theta(R,\theta',v'),V(R,\theta',v')\right)$.
In the second integral, we remark that $|\phi'_f\left(\Theta(s)\right)|\leq \|f\|_{L^1}$ and, the support of $\psi$ being compact, $v$ is bounded. Hence, by $\frac{v^2}{2}-m_0\cos\theta=\frac{V^2}{2}-m_0\cos\Theta$, $V(s)$ is bounded. Therefore, by dominated convergence (using that $F'(e_0)\in L^1$), as $R\rightarrow\infty$, we get
$$
\int_{0}^{2\pi}\int_{\RR} f(\theta,v)\psi(\theta,v)d\theta dv=\int_{-\infty}^{0}\int_{0}^{2\pi}\int_{\RR} e^{\lambda s}F'(e_0) \phi'_f\left(\Theta(s)\right)V(s)\psi\left(\theta,v\right)ds d\theta dv
$$
i.e.
\begin{align*}
f(\theta,v)&=F'(e_0)\int_{-\infty}^0 e^{\lambda s} \phi'_f\left(\Theta(s)\right)V(s)ds=F'(e_0)\int_{-\infty}^0 e^{\lambda s}\frac{d}{ds}\left( \phi_f\left(\Theta(s)\right)\right)ds\\
&=F'(e_0)\phi_f\left(\theta\right)-F'(e_0)\int_{-\infty}^{0}\lambda e^{\lambda s} \phi_f({\Theta}(s))ds
\end{align*}
almost everywhere.
Recall that $\phi_f(\theta)=-m\cos \theta$, with $m=\int^{2 \pi}_{0}\rho_f(\theta)\cos\theta d\theta$. Then we can rewrite this expression of $f$ as
$$
f(\theta,v)=-mF'(e_0)\cos\theta-mF'(e_0)\int_{-\infty}^{0}\lambda e^{\lambda s}\cos {\Theta}(s)ds.
$$
Integrating both sides of this equation with respect to $\cos \theta d\theta dv$ we get
\begin{align*}
m=\int_{0}^{2 \pi}\rho_f(\theta)\cos\theta d\theta =&-m\int_{0}^{2 \pi}\int_{\Real}F'(e_0) \cos^2\theta d\theta dv\\
&+m\int_{0}^{2 \pi}\int_{\Real}F'(e_0)\left(\int_{-\infty}^{0}\lambda e^{\lambda s} \cos {\Theta}(s)ds\right)\cos\theta d\theta dv,
\end{align*}
i.e. $mG(\lambda)=0$. It is clear that $m\neq 0$, otherwise $f=0$ a.e.. Finally, we get $G(\lambda)=0$.

\bs

Reciprocally, assume that $G(\lambda)=0$ for some $\lambda>0$. Let $f$ be given by \eqref{flem2}. We have proved above that this function belongs to $L^1(\TT\times\RR)$. Moreover, since $\Theta(s,-\theta,-v)=\Theta(s,\theta,v)$, we have $\int \rho_f\sin \theta d\theta=0$. Multiplying \eqref{flem2} by $\cos \theta$ and integrating with respect to $\theta$ and $v$, and using that $G(\lambda)=0$, we get $\int \rho_f\cos \theta d\theta=1$, so $f$ is not the zero function and we have $\phi_f(\theta)=-\cos \theta$.

We now check that the function $f$ given by \eqref{flem2} is an eigenfunction of $L$ associated with $\lambda$. From \eqref{flem2}, we get
$$
f(\Theta(t),V(t))=-F'(e_0)\phi_f\left(\Theta(t)\right)+F'(e_0)\int_{-\infty}^{0}\lambda e^{\lambda s}\phi_f\left(\Theta(s,\Theta(t),V(t))\right)ds.
$$
Note that $\Theta(s,\Theta(t),V(t))=\Theta\left(s+t,\theta,v\right)$. Therefore
\begin{align*}
f(\Theta(t),V(t))&=-F'(e_0)\phi_f\left(\Theta(t)\right)+ F'(e_0)\int_{-\infty}^{0}\lambda e^{\lambda s}\phi_f\left(\Theta(s+t)\right)ds\\
&=-F'(e_0)\phi_f\left(\Theta(t)\right) + F'(e_0)e^{-\lambda t}\int_{-\infty}^{t}\lambda e^{\lambda s} \phi_f\left(\Theta(s)\right)ds\\
&=F'(e_0)e^{-\lambda t}\int_{-\infty}^{t} e^{\lambda s}\phi_f'\left(\Theta(s)\right)V(s)ds,
\end{align*}
where we integrated by parts.
Then
\begin{align*}
e^{\lambda t}f(\Theta(t),V(t))&=F'(e_0)\int_{-\infty}^{t} e^{\lambda s}\phi_f'\left(\Theta(s)\right)V(s)ds\\
&=\int_{-\infty}^{t} e^{\lambda s}\phi_f'\left(\Theta(s)\right)\pa_vf_0(\Theta(s),V(s))ds.
\end{align*}
Differentiating both sides with respect to $t$, we obtain in the sense of distributions that for all $t\in \RR$,
\begin{align*}
&&e^{\lambda t}\left(\lambda f(\Theta(t),V(t))+V(t)\pa_\theta f(\Theta(t),V(t))-\phi_0'(\Theta(t))\pa_vf(\Theta(t),V(t))\right)\qquad \\
&&=e^{\lambda t}\phi_f'\left(\Theta(t)\right)\pa_vf_0(\Theta(t),V(t)).
\end{align*}
By writing this equation at $t=0$, we get $Lf=\lambda f$: $f$ is an unstable eigenfunction of $L$. This ends the proof of Lemma \ref{lem1}.
\qed

\subsection{Limiting behavior of $G(\lambda)$ near $\lambda=0$: proof of Lemma \ref{lim1G}}
\label{lim0}

To study  $\lim \limits_{\,\,\lambda\to 0^+}G(\lambda)$ we need to analyze the limit of the function
\begin{equation}\label{littleg}
g_\lambda(\theta,v)=\int_{-\infty}^{0}\lambda e^{\lambda s}\cos{\Theta}(s,\theta,v)ds
\end{equation}
as $\lambda\to 0$. We provide this result in the next lemma, where we also recall some well-known facts on the solution of the characteristics equations \eqref{siscarac}, which are nothing but the pendulum equations.

\begin{lemma}\label{lemperiod}
Let $(\theta,v)\in \TT\times \RR$ and $e_0=\frac{v^2}{2}-m_0\cos\theta$. Consider the solution $\left(\Theta(s,\theta,v),V(s,\theta,v)\right)$ to the characteristics equations \eqref{siscarac}. Then the following holds true.
\begin{itemize}
\item[(i)]
If $e_0>m_0$ then, for all $s\in\RR$, we have
\begin{equation}
\Theta(s+T_{e_0})=\Theta(s)+2\pi,\qquad V(s+T_{e_0})=V(s),
\label{qperiod}
\end{equation}
with
\begin{equation}\label{T1}
T_{e_0}=\int_{0}^{2\pi}\frac{d\theta^{'}}{\sqrt{2\left(e_0+m_0\cos\theta^{'}\right)}}>0.
\end{equation}
\item[(ii)] If $-m_0< e_0<m_0$ then $\Theta$ and $V$ are periodic with period given by
\begin{equation}\label{T2}
{T_{e_0}}=4\int_{0}^{\theta_{m_0}}\frac{d\theta}{\sqrt{2\left(e_0+m_0\cos\theta\right)}}=\frac{4}{\sqrt{m_0}}\int_0^{\pi/2}\frac{d\theta}{\sqrt{1-\frac{m_0+e_0}{2m_0}\sin^2\theta}}>0,
\end{equation}
where $\theta_{m_0}=\arccos(-\frac{e_0}{m_0})$.
\item[(iii)] We have
\begin{equation}\label{deflim}
\lim \limits_{\,\,\lambda\to 0^+}g_\lambda(\theta,v)=\left\{ \begin{array}{l}
\ds\frac{1}{T_{e_0}}\int_{0}^{2 \pi}\!\!\frac{\cos\theta^{'}}{\sqrt{2(e_0+m_0 \cos\theta^{'})}}d\theta^{'}\qquad \text{if}\; e_0>m_0,\\[5mm]
\ds\frac{4}{T_{e_0}}\int_{0}^{\theta_{m_0}}\!\!\frac{\cos\theta'}{\sqrt{2(e_0+m_0 \cos\theta')}}d\theta'\qquad \text{if}\; -m_0\!<\!e_0\!<\!m_0.
\end{array} \right. 
\end{equation}
\end{itemize}
\end{lemma}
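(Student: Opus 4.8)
The plan is to read \eqref{lemperiod} as the classical phase-plane analysis of the pendulum \eqref{siscarac}, supplemented by an elementary averaging argument for part~(iii). The only input used throughout is conservation of energy: since $\frac{d}{ds}\bigl(\tfrac12V(s)^2+\phi_0(\Theta(s))\bigr)=0$ along \eqref{siscarac}, one has $V(s)^2=2\bigl(e_0+m_0\cos\Theta(s)\bigr)$ for all $s$, with $e_0=\tfrac{v^2}{2}-m_0\cos\theta$; moreover the vector field in \eqref{siscarac} is autonomous and $2\pi$-periodic in $\theta$ (because $\phi_0'=m_0\sin\theta$), so translating $\Theta$ by $2\pi$ again yields a solution. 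For part~(i), when $e_0>m_0$ we get $V(s)^2\ge 2(e_0-m_0)>0$, hence $V$ never vanishes and keeps a constant sign; thus $\Theta$ is strictly monotone with $\dot\Theta=\pm\sqrt{2(e_0+m_0\cos\Theta)}$, and separation of variables shows that the time needed for $\Theta$ to vary by $2\pi$ equals $T_{e_0}=\int_0^{2\pi}\frac{d\theta'}{\sqrt{2(e_0+m_0\cos\theta')}}<\infty$. Comparing $s\mapsto(\Theta(s+T_{e_0})\mp 2\pi,\,V(s+T_{e_0}))$ with $(\Theta,V)$ at $s=0$ and invoking uniqueness of solutions yields \eqref{qperiod}.

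For part~(ii), when $-m_0<e_0<m_0$ the constraint $V^2\ge 0$ confines $\Theta(s)$, modulo $2\pi$, to $[-\theta_{m_0},\theta_{m_0}]$ with $\theta_{m_0}=\arccos(-e_0/m_0)\in(0,\pi)$. Near a turning point $e_0+m_0\cos\theta'$ vanishes linearly, so $\int^{\theta_{m_0}}\frac{d\theta'}{\sqrt{2(e_0+m_0\cos\theta')}}<\infty$: the turning points $\pm\theta_{m_0}$ are reached in finite time, and there $\ddot\Theta=-m_0\sin\Theta\ne 0$, so $\Theta$ reverses. Hence the phase curve $\{V^2=2(e_0+m_0\cos\Theta)\}$ is a closed loop, traversed periodically, and the period equals four times the time from $\Theta=0$ to $\Theta=\theta_{m_0}$, namely the first expression in \eqref{T2}; the elliptic-integral form follows from the substitution $\sin(\Theta/2)=\sin(\theta_{m_0}/2)\sin u$ together with $\sin^2(\theta_{m_0}/2)=\tfrac{m_0+e_0}{2m_0}$, a routine change of variables I shall not spell out.

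For part~(iii), the key is the elementary fact that if $h\in C(\RR)$ is $T$-periodic then $\int_{-\infty}^{0}\lambda e^{\lambda s}h(s)\,ds\to\frac1T\int_0^{T}h$ as $\lambda\to 0^+$. Indeed, writing $h=\bar h+(h-\bar h)$ with $\bar h=\frac1T\int_0^Th$, the zero-mean part $h-\bar h$ admits a bounded $T$-periodic primitive $H$, and integrating by parts gives $\int_{-\infty}^{0}\lambda e^{\lambda s}(h-\bar h)\,ds=\lambda H(0)-\lambda\int_{-\infty}^{0}\lambda e^{\lambda s}H(s)\,ds=O(\lambda\|H\|_\infty)\to 0$, while the constant part integrates to $\bar h$. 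Applying this with $h(s)=\cos\Theta(s,\theta,v)$, which is $T_{e_0}$-periodic — by \eqref{qperiod} if $e_0>m_0$, and because $\Theta$ itself is periodic by part~(ii) if $-m_0<e_0<m_0$ — reduces \eqref{deflim} to computing $\frac1{T_{e_0}}\int_0^{T_{e_0}}\cos\Theta(s)\,ds$. This follows from the change of variable $\theta'=\Theta(s)$, i.e.\ $ds=d\theta'/\sqrt{2(e_0+m_0\cos\theta')}$ on each monotone arc: since the time-average over a full period is independent of the starting point on the orbit, one may choose a representative for which $[0,T_{e_0}]$ splits into one arc covering $[0,2\pi]$ in the rotating case, and into four symmetric arcs each covering $[0,\theta_{m_0}]$ in the librating case, producing exactly the two formulas in \eqref{deflim}.

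The steps are all classical, so there is no serious obstacle; the points that require genuine (if modest) care are the finite-time-to-turning-point and reversal arguments in part~(ii) — equivalently, that the confined orbit is genuinely periodic rather than merely bounded — and, in part~(iii), the averaging limit together with the orientation/endpoint bookkeeping in the substitution $\theta'=\Theta(s)$. I expect part~(ii) and that final identification to absorb most of the work.
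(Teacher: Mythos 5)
Your proposal is correct and follows essentially the same route as the paper: phase-plane analysis of the pendulum via energy conservation for (i)--(ii) (the paper is in fact sketchier than you on (ii), which it dismisses as classical), followed in (iii) by reduction to the time average of $\cos\Theta$ over one period and the change of variable $\theta'=\Theta(s)$ on monotone arcs, doubled by symmetry in the librating case. The only local difference is how the Abel mean is shown to tend to the period average: you integrate by parts against a bounded periodic primitive of the zero-mean part, while the paper sums the geometric series to get $g_\lambda=\frac{\lambda}{1-e^{-\lambda T_{e_0}}}\int_0^{T_{e_0}}e^{-\lambda s}\cos\Theta(-s)\,ds$ and passes to the limit; both are equally standard.
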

\begin{proof}
\textit{(i)} Let $e_0>m_0$. Without loss of generality, since $\Theta(s,-\theta,-v)=\Theta(s,\theta,v)$ and $V(s,-\theta,-v)=V(s,\theta,v)$, we can only treat the case $v>0$. As we have 
$$\frac{V(s)^2}{2}=e_0+m_0\cos \Theta(s)\geq e_0-m_0>0,$$ 
$V(s)$ does not vanish and remains positive. Hence $\Theta(s)$ is the solution of the following autonomous equation
\begin{equation}
\label{eqautonome}\dot\Theta(s)=V(s)=\sqrt{2(e_0+m_0\cos \Theta(s))}
\end{equation}
and is strictly increasing with $\Theta(s)\to +\infty$ as $s\to+\infty$. Let $T_{e_0}$ be the unique time such that $\Theta(T_{e_0})=\theta+2\pi$. By Cauchy-Lipschitz's theorem, we have  $\Theta(s+T_{e_0})-2\pi=\Theta(s)$ and \eqref{qperiod} holds. Defining
$$P(\tau) =\int_0^\tau \frac{d\theta'}{\sqrt{2(e_0+m_0\cos\theta')}},$$
the solution of \eqref{eqautonome} satisfies $P(\Theta(s))-P(\theta)=s$. Therefore, we have $T_{e_0}=P(\theta+2\pi)-P(\theta)$, from which we get \eqref{T1}.

\ms
\ni
\textit{(ii)}  Let $-m_0<e_0<m_0$. In this case, $\Theta(s)$ will oscillate between the two values $\theta_{m_0}=\arccos(-\frac{e_0}{m_0})$ and $-\theta_{m_0}$ with a period $T_{e_0}$ given by \eqref{T2}. On the half-periods where $\Theta$ is increasing, we also have \eqref{eqautonome}. We skip the details of the proof, which is classical.

\ms
\ni
\textit{(iii)}  We remark that $\cos \Theta(s+kT_{e_0})=\cos \Theta(s)$ for all $s\in\RR$ and $k\in \ZZ$. Indeed, by {\em (i)}, for $e_0>m_0$ we have $\Theta(s+kT_{e_0})=\Theta(s)+2\pi k$ and, by {\em (ii)}, for $-m_0<e_0<m_0$ we have $\Theta(s+kT_{e_0})=\Theta(s)$. Hence, we compute from \eqref{littleg}
\begin{align*}
g_\lambda(\theta,v)&=\sum\limits_{k=0}^{+\infty}\int_{kT_{e_0}}^{(k+1)T_{e_0}}\lambda e^{-\lambda s}\cos\Theta(-s)ds \\
&=\sum\limits_{k=0}^{+\infty}\int_{0}^{T_{e_0}}\lambda e^{-\lambda s-k\lambda T_{e_0}}\cos\Theta(-s)ds \\
&=\left(\sum\limits_{k=0}^{+\infty}e^{-k\lambda T_{e_0}}\right)\int_{0}^{T_{e_0}}\lambda e^{-\lambda s}\cos\Theta(-s)ds\\
&=\frac{\lambda}{1-e^{-\lambda T_{e_0}}}\int_{0}^{T_{e_0}}e^{-\lambda s}\cos\Theta(-s)ds.
\end{align*}
Therefore, clearly,
$$
\lim \limits_{\,\,\lambda\to 0^+}g_\lambda(\theta,v)
=\frac{1}{T_{e_0}}\int_{0}^{T_{e_0}}\cos \Theta(-s)ds
=\frac{1}{T_{e_0}}\int_{0}^{T_{e_0}}\cos\Theta (s)ds.
$$
If $e_0>m_0$, we perform the change of variable $\theta'=\Theta(s)$ which is strictly increasing from $[0,T_{e_0}]$ to $[\theta,\theta+2\pi]$. Using \eqref{eqautonome}, we obtain
$$
\lim \limits_{\,\,\lambda\to 0^+}g_\lambda(\theta,v)
=\frac{1}{T_{e_0}}\int_{\theta}^{\theta+2\pi}\frac{\cos \theta'}{\sqrt{2(e_0+m_0\cos \theta')}}d\theta'=\frac{1}{T_{e_0}}\int_{0}^{2\pi}\frac{\cos \theta'}{\sqrt{2(e_0+m_0\cos \theta')}}d\theta'.
$$
If $-m_0<e_0<m_0$, we can always choose a time $t_0$ such that $\Theta(t_0)=-\theta_{m_0}$, $\Theta(t_0+T_{e_0}/2)=\theta_{m_0}$, $\Theta(s)$ is strictly increasing on $[t_0,t_0+T_{e_0}/2]$ and such that $\Theta(s)=\Theta(2t_0+T_{e_0}-s)$ for $s\in [t_0+T_{e_0}/2,t_0+T_{e_0}]$. We have
\begin{align*}
\lim \limits_{\,\,\lambda\to 0^+}g_\lambda(\theta,v)&=\frac{1}{T_{e_0}}\int_{t_0}^{t_0+T_{e_0}/2}\cos\Theta (s)ds+\frac{1}{T_{e_0}}\int_{t_0+T_{e_0}/2}^{t_0+T_{e_0}}\cos\Theta (s)ds\\
&=\frac{1}{T_{e_0}}\int_{t_0}^{t_0+T_{e_0}/2}\cos\Theta (s)ds+\frac{1}{T_{e_0}}\int_{t_0+T_{e_0}/2}^{t_0+T_{e_0}}\cos\Theta (2t_0+T_{e_0}-s)ds\\
&=\frac{2}{T_{e_0}}\int_{t_0}^{t_0+T_{e_0}/2}\cos\Theta (s)ds\\
&=\frac{2}{T_{e_0}}\int_{-\theta_{m_0}}^{\theta_{m_0}}\frac{\cos \theta'}{\sqrt{2(e_0+m_0\cos \theta')}}d\theta'=\frac{4}{T_{e_0}}\int_{0}^{\theta_{m_0}}\frac{\cos \theta'}{\sqrt{2(e_0+m_0\cos \theta')}}d\theta',
\end{align*}
where, on the time interval $[t_0,t_0+T_{e_0}/2]$, we performed the change of variable $\theta'=\Theta(s)$.
\end{proof}

\begin{proof}[Proof of Lemma \ref{lim1G}]
Now we come back to the definition \eqref{glambda} of $G_\lambda$, which reads
\begin{align}
G(\lambda) = &1+\int_{0}^{2 \pi}\int_{\Real}F'\left(\frac{v^2}{2}-m_0\cos\theta\right) \cos^2\theta d\theta dv\nonumber\\
&\quad -\int_{0}^{2 \pi}\int_{\Real}F'\left(\frac{v^2}{2}-m_0\cos\theta\right)g_\lambda(\theta,v) \cos \theta d\theta dv.\label{Glambda2}
\end{align}
We remark that $|g_\lambda(\theta,v)|\leq 1$ and recall that the function $F'\left(\frac{v^2}{2}-m_0\cos\theta\right)$ belongs to $L^1(\TT\times\RR)$. Therefore, we can pass to the limit in the second integral by dominated convergence and deduce from Lemma \ref{lemperiod} {\em (iii)} (note that the set $\{(\theta,v):\,e_0(\theta,v)\leq -m_0\}$ is of measure zero) that

\begin{align*}
\lim \limits_{\,\,\lambda\to 0^+}G(\lambda) &= 1+\int_{0}^{2 \pi}\int_{\Real}F'(e_0) \cos^2\theta d\theta dv\\
&\quad \quad -\iint_{e_0(\theta,v)>m_0}F'(e_0)\left(\frac{1}{T_e}\int_{0}^{2 \pi}\!\!\frac{\cos\theta^{'}}{\sqrt{2(e_0+m_0 \cos\theta^{'})}}d\theta^{'}\right)\cos \theta d\theta dv\\
&\quad \quad -\iint_{-m_0<e_0(\theta,v)<m_0}F'(e_0)\left(\frac{4}{T_e}\int_{0}^{\theta_{m_0}}\!\!\frac{\cos\theta^{'}}{\sqrt{2(e_0+m_0 \cos\theta^{'})}}d\theta^{'}\right)\cos \theta d\theta dv\\
= &1+\int_{0}^{2 \pi}\int_{\Real}F'(e_0)\cos^2\theta d\theta dv\\
&\quad -\int_{0}^{2 \pi}\int_{\Real} F'(e_0)\left(\frac{\ds \int_{\mathcal D_{e_0}}\cos \theta'(e_0+m_0\cos \theta')^{-1/2}d\theta'}{\ds\int_{\mathcal D_{e_0}}(e_0+m_0\cos \theta')^{-1/2}d\theta'} \right)\cos \theta d\theta dv\\
=& 1+\int_{0}^{2 \pi}\int_{\Real}F'(e_0) \cos(\theta)^2dv d\theta-\int_{0}^{2 \pi}\int_{\Real}F'(e_0) \left(\Pi_{m_0}\left(\cos \theta\right)\right)^2 d\theta dv,
\end{align*}
with
\begin{equation}\label{projec}
(\Pi_{m_0} h)(e)=\frac{\ds\int_{\mathcal D_e}(e+m_0\cos \theta)^{-1/2}h(\theta)d\theta}{\ds\int_{\mathcal D_e}(e+m_0\cos \theta)^{-1/2}d\theta},
\end{equation}
for all function $h(\theta)$ and
$${\mathcal D_e}=\left\{\theta'\in \TT\,:\,\,m_0\cos \theta'>-e\right\}.$$
Here the operator $\Pi_{m_0}$ is a variant of the operator $\Pi$  given by (3.8) in \cite{ORB}, this operator should be understood as the ``projector''onto the functions which depend only on the microscopic energy $e_0(\theta,v)$. A projector of this type is also mentioned in the work by Guo and Lin \cite{LIN2}.

\color{black}
Now we remark that straightforward calculations give

\begin{eqnarray}
1-\kappa_0&=&1+\int_{0}^{2 \pi}\int_{\Real}F'(e_0)\left(\cos^2 \theta -2\cos\theta\Pi_{m_0}\left(\cos \theta\right)+\left(\Pi_{m_0}\left(\cos \theta\right)\right)^2\right)\,d\theta dv,\nonumber\\
&=&1+\int_{0}^{2 \pi}\int_{\Real}F'(e_0)\cos^2\theta d\theta dv-\iint F'(e_0) \left(\Pi_{m_0}\left(\cos \theta\right)\right)^2\,d\theta dv,\nonumber
\end{eqnarray}
where
\begin{align*}
-\kappa_0:&=\int_{0}^{2 \pi}\int_{\Real}F'(e_0)\left(\frac{\ds \int_{\mathcal D_{e}}(\cos \theta-\cos \theta')(e_0+m_0\cos \theta')^{-1/2}d\theta'}{\ds\int_{\mathcal D_{e}}(e_0+m_0\cos \theta')^{-1/2}d\theta'} \right)^2d\theta dv.
%&=\int_{0}^{2 \pi}\int_{\Real}F'(e)\cos^2\theta d\theta dv-\int_{0}^{2 \pi}\int_{\Real}F'(e)\left(\frac{\ds \int_{\mathcal D_{e}}\cos \theta'(e+m_0\cos \theta')^{-1/2}d\theta'}{\ds\int_{\mathcal D_{e}}(e+m_0\cos \theta')^{-1/2}d\theta'} \right)\cos \theta d\theta dv.
\end{align*}
This calculation uses that $\Pi_{m_0}$ is a projector. We finally get  \eqref{limG} and the proof of Lemma \ref{lim1G} is complete.
\end{proof}

\subsection{Limiting behavior of $G(\lambda)$ as $\lambda\to\infty$: proof of Lemma \ref{lim2G}}
\label{liminfty}
In this subsection, we prove Lemma \ref{lim2G}. An integration by parts in \eqref{littleg} yields
$$g_\lambda(\theta,v)=\cos\theta+\int_{-\infty}^{0}e^{\lambda s}V(s,\theta,v)\sin{\Theta}(s,\theta,v)ds.$$
The velocity can be bounded independently of $s$ thanks to the conservation of the energy,
$$|V(s)|=\left(v^2+2m_0\cos \theta -2m_0\cos \Theta(s)\right)^{1/2}\leq \left(v^2+4m_0\right)^{1/2}.$$
Thus
$$\left|\int_{-\infty}^{0}e^{\lambda s}\sin{\Theta}(s,\theta,v)V(s,\theta,v)ds\right|\leq \left(v^2+4m_0\right)^{1/2}\int_{-\infty}^{0}e^{\lambda s}ds= \frac{\left(v^2+4m_0\right)^{1/2}}{\lambda}$$
and, for all $(\theta,v)$,
$$\lim_{\lambda\to +\infty}g_\lambda(\theta,v)=\cos \theta.$$
Using again that $|g_\lambda(\theta,v)|\leq 1$ and that $F'(e_0(\theta,v))$ belongs to $L^1$, we deduce directly \eqref{limGinfty} from \eqref{Glambda2} and from dominated convergence. The proof of Lemma \ref{lim2G} is complete.
\qed

\section{A nonlinear instability result: proof of Theorem \ref{mainthm2}}
\label{section3}
We start by an analysis of the linearized HMF operator $L$ around the inhomogeneous equilibrium state $f_0$,  where $L$ is given by \eqref{defL}.
We write
\begin{equation}\label{defL2}
L=L_0  +K, 
\end{equation}
where
\begin{equation}\label{defL0}
L_0f= -v\partial_{\theta} f-E_{f_0}\partial_v f, \qquad Kf=-E_f\partial_{v} f_0, \qquad E_f = -\pa_\theta \phi_f.
\end{equation}

\subsection{Estimates on the semigroup $e^{tL}$}
\label{subsecteL}
Let us state some useful properties of the operator $L_0$ given by \eqref{defL0}.  Since $\phi_0(\theta)=-m_0 \cos\theta$ is smooth, the characteristics equations \eqref{siscarac}  admit a unique  solution $\Theta(s,\theta, v),V(s, \theta, v)$, which is globally defined and $\mathcal C^\infty$ in the variables $(s,\theta,v)$. Moreover, this solution has  bounded derivatives with respect to $\theta$ and $v$, locally in time.   Let $k\in \NN$. For any $f$ in the Sobolev space $W^{k,1}(\TT\times\RR)$, the function 
\begin{equation}
\label{solutionL0}
e^{tL_0} f (t,\theta,v):= f\left( \Theta(-t,\theta,v), V(-t,\theta,v) \right), \quad \forall t\geq 0,
\end{equation}
belongs to $\mathcal C^0(\RR, W^{k,1}(\TT\times\RR))$ and is clearly a solution to $\pa_tg= L_0g$ with initial data $f$.  This means that the semigroup  $e^{tL_0}$ generated by the operator
$L_0$  is strongly continuous on $W^{k,1}(\TT\times\RR)$.

Our aim is to apply the abstract results in \cite{shizuta} concerning perturbation theory of linear operators. Hence, we need to prove the following estimate. For all $\beta>0$, there exists a positive constant $M_{\beta,k}$ such that 
\begin{equation} 
\label{Lestimate}\| e^{tL_0} f\|_{W^{k,1}} \leq M_{\beta,k} e^{t \beta }\| f\|_{W^{k,1}}\qquad \forall f\in W^{k,1}(\TT\times\RR), \quad \forall t\geq 0,
\end{equation}
where $M_{\beta,k}$ depends on $\beta$ and $k$. In fact this estimate will be proved for a subclass of functions $f$.  From the assumptions of Theorem \ref{mainthm2} on $F$, there exists $e_*<m_0$ such that support  of $F$ is $(-\infty,e_*]$. This means that the support of $f_0$ is contained in $\overline{\Omega_0}$, where $\Omega_0$ is the smooth open set
 $$\Omega_0= \left\{(\theta, v): \ \frac{v^2}{2} -m_0\cos\theta < e_*\right\}.$$
We then introduce the functional space
$$\mathscr E_k = \left\{ f \in W^{k,1}(\TT\times\RR): \ \ \mbox{Supp}(f) \subset \overline{\Omega_0} \right\},$$
and claim that for all $f\in \mathscr E_k$, we have $e^{tL_0} f\in \mathscr E_k$ and, for all $\beta>0$,
\begin{equation} 
\label{Lestimatebis} \| e^{tL_0} f\|_{W^{k,1}} \leq M_{\beta,k} e^{t \beta } \| f\|_{W^{k,1}}, \qquad \forall f\in \mathscr E_k,\quad  \forall t\geq 0,
\end{equation}
 $M_{\beta,k}$ being a positive constant depending on $\beta$ and $k$. 

Assume for the moment that estimate \eqref{Lestimatebis} holds true. From the assumptions of Theorem \ref{mainthm2}, one deduces that $\pa_vf_0 \in \mathscr E_k $. It is then easy to check that $e^{tL_0} K$ is a compact operator on
$\mathscr E_k$, for all $t\in \RR$, and the map $t\mapsto e^{tL_0} K \in \mathscr{L}(\mathscr E_k)$  is continuous on $\RR$. Hence, $K$ is $L_0$-smoothing in the sense of \cite{shizuta} (page 707). Assumptions of Theorem 1.1 in \cite{shizuta} are therefore satisfied, which implies that $L$ generates a strongly continuous semigroup $e^{tL}$. Now, from Theorem 1.2 in \cite{shizuta}, for all $\beta >0$, any point of the spectrum $\sigma(L)$ lying in the half plane $\RE z >\beta$ is an isolated eigenvalue  with finite algebraic multiplicity. Furthermore, the set $\sigma (L) \cap \{\RE z >\beta\}$ is finite. 

The assumptions of Theorem \ref{mainthm2} clearly imply those of Theorem \ref{mainthm1}. Hence $L$ admits at least one eigenvalue $\lambda\in \RR_+^*$ associated with an eigenfunction $\widetilde f\in L^1(\TT\times\RR)$. We claim that, in fact, $\widetilde f\in \mathscr E_k$ which will be proved below. This means that the set of eigenvalues of $L$ on $\mathscr E_k$ with positive real part is not empty, and we therefore can choose an eigenvalue $\gamma$ with positive maximal real part. Finally, we apply Theorem 1.3 in \cite{shizuta} and get that, for all $\beta>\RE\gamma$, there exists a positive constant $M_{\beta,k}$ such that
\begin{equation}
\label{Lestimate}\| e^{tL} f\|_{W^{k,1}} \leq M_{\beta,k} \ e^{t \beta }\| f\|_{W^{k,1}}\qquad \forall f\in \mathscr E_k, \quad  \forall t\geq 0.
\end{equation}

\ms

\noindent {\em Proof of \eqref{Lestimatebis} and of the claim $\widetilde f\in \mathscr E_k$}.   Let $f\in \mathscr E_k$. From \eqref{solutionL0}, we clearly have
$$ \|e^{tL_0} f\|_{L^1} = \| f\|_{L^1}, \quad \forall f \in L^1, \quad \forall t\geq 0.$$
Moreover, we know from the analysis of the characteristics problem  \eqref{siscarac} performed in Section \ref{section2}, that by conservation of the energy,  for all $(\theta, v) \in \Omega_0$, we have  $(\Theta(t, \theta,v), V(t, \theta,v)) \in \Omega_0$.
Thus 
$$\mbox{Supp}(e^{tL_0}f) \subset \overline{\Omega_0}.$$
Let $k\geq 1$.
By \eqref{solutionL0}, to get an estimate of $e^{tL_0} f$ in $W^{k,1}(\TT\times\RR)$, it is sufficient to estimate $\Theta$ and $V$ in $W^{k,\infty}(\TT\times\RR)$ for $(\theta,v) \in \Omega_0$. Recall that, since $e_*<m_0$, $\Theta$ and $V$ are periodic functions with period $T_{e_0}$. Moreover, \eqref{T2} shows that $T_{e_0}$ is a $\mathcal C^\infty$ function of $e_0$ on $[-m_0,e_*]$, which means that it is also a $\mathcal C^\infty$ function of $(\theta,v)$. Note also that $\frac{2\pi}{\sqrt{m_0}}\leq T_{e_0}\leq T_{e_*}$.
Define now the following $1$-periodic functions
$$\widetilde \Theta(s,\theta,v)=\Theta\left(sT_{e_0},\theta,v\right),\qquad \widetilde V(s,\theta,v)=V\left(sT_{e_0},\theta,v\right)$$
satisfying
$$\frac{d\widetilde \Theta}{ds}=T_{e_0}\widetilde V,\qquad \frac{d\widetilde V}{ds}=-m_0T_{e_0}\sin\widetilde \Theta.$$
Applying Gronwall Lemma, one gets
$$ |\pa_s^r\pa^j_\theta \pa^\ell_v \widetilde \Theta |+ |\pa_s^r\pa^j_\theta \pa^\ell_v \widetilde V| \leq C_k e^{C_k s },  \quad \forall s\geq 0, \quad \forall (\theta,v)\in \Omega_0, \quad \mbox{for} \ r+j+\ell \leq k.$$
The period of $\widetilde \Theta$ and $\widetilde V$ being independent of $(\theta,v)$, the functions $\pa^j_\theta \pa^\ell_v \widetilde \Theta$ and $\pa^j_\theta \pa^\ell_v \widetilde V$ are also 1-periodic and therefore
$$ |\pa_s^r\pa^j_\theta \pa^\ell_v \widetilde \Theta |+ |\pa_s^r\pa^j_\theta \pa^\ell_v \widetilde V| \leq C_k'=C_ke^{C_k},  \quad \forall s\geq 0, \quad \forall (\theta,v)\in \Omega_0, \quad \mbox{for} \ r+j+\ell \leq k.$$
Coming back to $\Theta$ and $V$, we deduce
\begin{equation}\label{estderiv} |\pa^j_\theta \pa^\ell_v\Theta |+ |\pa^j_\theta \pa^\ell_v  V| \leq C_k(1+s^k),  \quad \forall s\geq 0, \quad \forall (\theta,v)\in \Omega_0, \quad \mbox{for} \ j+\ell \leq k.
\end{equation}
Using this estimate and \eqref{solutionL0}, we finally get \eqref{Lestimatebis}.

Let us finally prove that $\widetilde f\in \mathscr E_k$. By Lemma \ref{lem1}, the function $\widetilde f$ is given by 
$$\widetilde f(\theta,v)=-F'(e_0(\theta,v))\cos\theta+F'(e_0(\theta,v))\int_{-\infty}^{0}\lambda e^{\lambda s}\cos\left({\Theta}(s,\theta,v)\right)ds.$$ Hence, the support of $F'(e_0(\theta,v))$ being in $\overline{\Omega_0}$, the support on $\widetilde f$ will also be contained in $\overline{\Omega_0}$. Moreover, by using \eqref{estderiv}, we obtain that, for some $C_k>0$, we have
$$\forall j+\ell \leq k,\quad \forall (\theta,v)\in \TT\times \RR,\quad\left|\pa^j_\theta \pa^\ell_v\int_{-\infty}^{0}\lambda e^{\lambda s}\cos\left({\Theta}(s,\theta,v)\right)ds\right|\leq C_k.$$
This is sufficient to deduce from $F\in \mathcal C^\infty$ that $\widetilde f\in \mathscr E_k$.

 \subsection{An iterative scheme}

In this part, we prove Theorem \ref{mainthm2} by following the strategy developed by Grenier in \cite{grenier}, which has been also used in \cite{Han-Hauray, Han-Nguyen}
to analyse instabilities for homogeneous steady states of  Vlasov-Poisson models. Let $N\geq 1$ be an integer to be fixed later. According to the previous subsection, we can consider an eigenvalue $\gamma$ of $L$ on $\mathscr E_N$ with maximal real part, $\RE \gamma >0$. Let $g\in \mathscr E_N$ be an associated eigenfunction. With no loss of generality, we may assume that $\|\RE g\|_{L^1}=1$. Let
\begin{equation}
\label{f1}
 f_1(t,\theta,v)= \RE \left( e^{\gamma t} g(\theta,v)\right)\chi_\delta(e_0(\theta,v)),
 \end{equation}
 with $e_0(\theta,v)=\frac{v^2}{2}+\phi_0(\theta)$ and where  $0\leq \chi_\delta(e) \leq 1$ is a smooth real-valued truncation function to be defined further, in order to ensure the positivity of $f_0+\delta f_1(0)$. Note that $f_1$ is almost a growing mode solution to the linearized HMF model \eqref{sislin} since we have
 $$(\partial_t-L)f_1=\RE(e^{\gamma t}\widetilde R_\delta),$$
 where
 \begin{equation}
 \label{Rdelta}
 \widetilde R_\delta=\left(-E_{(1-\chi_\delta)g}+(1-\chi_\delta)E_g\right)\pa_vf_0
 \end{equation}
 will be small.
 We now construct an approximate solution $f^{N}_{app}$ to the HMF model (\ref{sis1}) of the form
\begin{equation}
f^{N}_{app}=f_0+\sum_{k=1}^{N}\delta^{k}f_k,
\end{equation}
for sufficiently small $\delta>0$, in which $f_k$ ($k\geq2$) solves inductively the linear problem
\begin{equation}\label{eqfk}
(\partial_t-L)f_k+\sum_ {j=1}^{k-1}E_{f_j}\partial_v f_{k-j}=0
\end{equation}
with $f_k(0)=0$. Then $f^{N}_{app}$ approximately solves the HMF model (\ref{sis1}) in the sense that
\begin{equation}
\partial_t f^{N}_{app}+v\partial_{\theta} f^{N}_{app}-\partial_{\theta} \phi_{f^{N}_{app}}\partial_v f^{N}_{app}=R_{N}+\delta \RE(e^{\gamma t}\widetilde R),
\end{equation}
where the remainder term $R_{N}$ is given by
\begin{equation}
R_{N}=\sum_{1\leq j,\ell \leq N;j+\ell\geq N+1}\delta^{j+\ell}E_{f_j}\partial_v f_{\ell}.
\end{equation}

\ms

\noindent {\em Step 1. Estimate of $f_k$.} We claim that $f_k \in \mathscr E_{N-k+1}$ and, for all $1\leq k\leq N$,
 \begin{equation}
 \label{estimatefk}
 \|f_k\|_{W^{N-k+1,1}} \leq C_k e^{kt\RE\gamma}.
 \end{equation}
We proceed by induction. From \eqref{f1}, this estimate is a consequence, for $k=1$, of
\begin{equation}
\label{condchi0}
\|g\chi_\delta\|_{W^{N,1}}\leq C_1,
\end{equation}
which is proved below in Step 5. Let $k\geq 2$. We have $$f_k(t)=-\int_{0}^{t}e^{L(t-s)}\sum_ {j=1}^{k-1}E_{f_j}(s)\partial_v f_{k-j}(s)ds.$$
Therefore, for $\RE \gamma < \beta < 2 \RE \gamma$, 
\begin{align*}
\|f_k\|_{W^{N-k+1,1}} & \leq  \sum_ {j=1}^{k-1} \int_0^t \left\|e^{L(t-s)}\left( E_{f_j}(s)\partial_v f_{k-j}(s)\right)\right\|_{W^{N-k+1,1}} ds  \\
   &\leq  M_{\beta, N-k+1} \sum_ {j=1}^{k-1} \int_0^t e^{\beta(t-s)} \left\|E_{f_j}(s)\right\|_{W^{N-k+1,\infty}} \left\|\partial_v f_{k-j}(s)\right\|_{W^{N-k+1,1}} ds \\
   &\leq  M_{\beta, N-k+1} \sum_ {j=1}^{k-1} \int_0^t e^{\beta(t-s)} \left\|f_j(s)\right\|_{L^{1}} \left\|f_{k-j}(s)\right\|_{W^{N-k+2,1}} ds\\
   &\qquad \mbox{since }k-j\leq k-1,\\
   &\leq  M_{\beta, N-k+1} \left(\sum_ {j=1}^{k-1} C_j C_{k-j} \right)\int_0^t e^{\beta(t-s)} e^{ks \RE \gamma } ds\\
   &\leq  \frac{M_{\beta, N-k+1}}{k\RE \gamma -\beta} \left(\sum_ {j=1}^{k-1} C_j C_{k-j} \right)  e^{k t\RE \gamma},
\end{align*}
where we used \eqref{Lestimate} and the recursive assumption. This ends the proof of \eqref{estimatefk}.

\ms 

\noindent {\em Step 2. Estimates of $f_{app}^N-f_0$ and $R_N$.}  The parameter $\delta$ and the time $t$ will be such that 
\begin{equation}
\label{cond0}\delta e^{t\RE \gamma}\leq \min\left(\frac 12,\frac{1}{2K_N}\right),\qquad K_N=\max_{1\leq k\leq N}C_k.
\end{equation}
Hence, from \eqref{estimatefk} we obtain
$$\|f_{app}^N-f_0\|_{W^{1,1}}\leq \sum_{k=1}^N\delta^k C_k e^{kt\RE \gamma}\leq K_N \frac{\delta e^{t\RE\gamma}}{1-\delta e^{t\RE\gamma}}\leq 1 $$
and
$$\|R_N\|_{L^1}\leq \sum_{k=N+1}^{+\infty}\delta^k e^{kt\RE \gamma}\sum_{1\leq j,\ell \leq N;j+\ell =k}C_jC_{\ell}\leq
 \widetilde C_N \left(\delta e^{t\RE \gamma}\right)^{N+1}.$$

\ms 

\noindent {\em Step 3. Estimate of $f-f_{app}^N$.} 
Let $f(t)$ be the solution of \eqref{sis1} with initial data $f_0+\delta \RE g\chi_\delta$ and let $h=f-f_{app}^N$.
Note that the positivity of $f(t)$ is ensured by $f_0+\delta \RE g\chi_\delta\geq 0$ and that we have
$$\|f(0)-f_0\|_{L^1}\leq \delta.$$
The function $h$ satisfies  the following equation
$$\partial_t h+v\partial_{\theta} h+ E_{f}\partial_v h = \left(E_{f_{app}^N} - E_{f}\right) \pa_v f_{app}^N- R_{N}-\delta \RE(e^{\gamma t}\widetilde R_\delta)$$
with $h(0)=0$. To get a $L^1$-estimate of $h$, we multiply this equation by $\mbox{sign}(h)$ and integrate in $(\theta, v)$. We get
\begin{align*}
 \frac{d}{dt} \| h\|_{L^1}& \leq \left\|E_{f_{app}^N} - E_{f}\right\|_{L^\infty} \left\|\pa_v f_{app}^N\right\|_{L^1}+\|R_{N}\|_{L^1}+\delta e^{t\RE \gamma}\|\widetilde R_\delta\|_{L^1}\\
 &\leq  \| h\|_{L^1} \left\|\pa_v f_{app}^N\right\|_{L^1}+\|R_{N}\|_{L^1}+\delta e^{t\RE \gamma}\|\widetilde R_\delta\|_{L^1}.
\end{align*}
From Step 2 we have $\left\|\pa_v f_{app}^N\right\|_{L^1} \leq \left\|\pa_v f_0\right\|_{L^1}+1$, which implies that
$$ \| h(t)\|_{L^1}\leq  \int_0^t   e^{(t-s)(\left\|\pa_v f_0\right\|_{L^1}+1)} \left(\|R_{N}(s)\|_{L^1}+\delta e^{s\RE \gamma}\|\widetilde R_\delta\|_{L^1}\right) ds.$$
Again from Step 2, we then get
  $$ \| h(t)\|_{L^1}\leq \int_0^t   e^{(t-s)(\left\|\pa_v f_0\right\|_{L^1}+1)}  \left( \widetilde C_N\left(\delta e^{s\RE \gamma}\right)^{N+1}+\delta e^{s\RE \gamma}\|\widetilde R_\delta\|_{L^1}\right) ds$$
We now fix $N$ as follows (with the notation $\lfloor \cdot \rfloor$ for the integer function)
$$N:=\left\lfloor\frac{\left\|\pa_v f_0\right\|_{L^1}+1}{\RE \gamma}\right\rfloor+1\geq 1$$
and claim that $\chi$ may be chosen such that
\begin{equation}
\label{condchi}
\|\widetilde R_\delta\|_{L^1}\leq \left(\delta e^{s\RE \gamma}\right)^{N},
\end{equation}
see Step 5 for the proof.
This yields
\begin{equation}
\label{esti1} \| f-f_{app}^N\|_{L^1}(t)\leq \widecheck C_N\left(\delta e^{t\RE \gamma}\right)^{N+1}
\end{equation}
with
$\widecheck C_N=\frac{1+\widetilde C_N}{3\RE \gamma}$.

\ms 

\noindent {\em Step 4. End of the proof.}  Since $\RE g$ is not zero, we can choose a real valued function $\varphi(\theta,v)$ in $L^\infty$ such that
$\|\varphi\|_{L^\infty}$=1 and $$\RE z_g>0 \quad \mbox{with}\quad z_g=\int_0^{2\pi}\int_\RR g \varphi d\theta dv.$$
Denoting
$$z_{g,\delta}=\int_0^{2\pi}\int_\RR g \chi_\delta \varphi d\theta dv,$$
we have
\begin{align*}
\iint f_1\varphi d\theta dv&=e^{t\RE \gamma}\RE\left(e^{it\IM \gamma}z_{g,\delta}\right)\\
&\geq e^{t\RE \gamma}\RE\left(e^{it\IM \gamma}z_g\right)-e^{t\RE \gamma} |z_g-z_{g,\delta}|\\
&\geq e^{t\RE \gamma}\RE\left(e^{it\IM \gamma}z_g\right)-e^{t\RE \gamma} \|g(1-\chi_\delta)\|_{L^1}
\end{align*}
We claim that
\begin{equation}
\label{condchi2}
\lim_{\delta\to 0}\|(1-\chi_\delta)g\|_{L^1}=0,
\end{equation}
which again will be proved in Step 5.
In order to end the proof of Theorem \ref{mainthm2}, we estimate from below, using \eqref{esti1} and \eqref{estimatefk},
\begin{align*}\|f-f_0\|_{L^1}& \geq \iint (f-f_0)\varphi d\theta dv=\iint (f_{app}^N-f_0)\varphi d\theta dv+\iint (f-f_{app}^N)\varphi d\theta dv\\
&\geq \delta \iint f_1\varphi d\theta dv-\sum_{k=2}^N\delta^k\|f_k\|_{L^1} - \widecheck C_N\left(\delta e^{t\RE \gamma}\right)^{N+1}\\
&\geq \delta  \iint f_1\varphi d\theta dv-\sum_{k=2}^NC_k\left(\delta e^{t\RE \gamma}\right)^k - \widecheck C_N\left(\delta e^{t\RE \gamma}\right)^{N+1}\\
&\geq \delta \iint f_1\varphi d\theta dv-2K_N\left(\delta e^{t\RE \gamma}\right)^2- \widecheck C_N\left(\delta e^{t\RE \gamma}\right)^{N+1}\\
&\geq\delta e^{t\RE \gamma}\left(\RE\left(e^{it\IM \gamma}z_g\right)- \|(1-\chi_\delta)g\|_{L^1}-2K_N\delta e^{t\RE \gamma} - \widecheck C_N\left(\delta e^{t\RE \gamma}\right)^{N}\right)
\end{align*}
Assume for a while that \begin{equation}\label{condition}\RE\left(e^{it\IM \gamma}z_g\right)\geq \frac{\RE z_g}{2}.\end{equation} We have
$$\|f-f_0\|_{L^1}\geq \frac{\delta e^{t\RE \gamma}\RE z_g}{2}\left(1-\frac{2\|(1-\chi_\delta)g\|_{L^1}}{\RE z_g}-\frac{4K_N\delta e^{t\RE \gamma}}{\RE z_g} - \frac{2\widecheck C_N\left(\delta e^{t\RE \gamma}\right)^{N}}{\RE z_g}\right)$$
Let $\delta_0>0$ be such that
 $$\frac{32K_N}{(\RE z_g)^2}\delta_0 + \frac{2 \widecheck C_N8^N}{(\RE z_g)^{N+1}}\delta_0^{N}\leq \frac14\quad \mbox{and}\quad \frac{8\delta_0 }{\RE z_g}\leq \min \left(\frac 12,\frac{1}{2K_N}\right)$$
(note that $N\geq 1$) and consider times $t$ such that
\begin{equation}\label{tdelta}\frac{4\delta_0 }{\RE z_g}\leq \delta e^{t\RE \gamma}\leq \frac{8\delta_0 }{\RE z_g}.\end{equation}
Owing to \eqref{condchi2}, we also choose $\delta$ small enough such that
$$\frac{2\|(1-\chi_\delta)g\|_{L^1}}{\RE z_g}\leq\frac{1}{4}.$$
We conclude from these inequalities that
$$\|f-f_0\|_{L^1}\geq \delta_0$$
and that \eqref{cond0} is satisfied. 

\bs
To end the proof, it remains to fix the time $t_\delta$ and to choose the truncation function $\chi_\delta$. Let us  show that, for $\delta$ small enough, there exists a time $t_\delta$ satisfying both \eqref{condition} and \eqref{tdelta}.
If $\IM \gamma =0$, then \eqref{condition} is clearly satisfied since $\RE z_g>0$: a suitable $t_\delta$ is then
$$t_\delta=\frac{1}{\RE \gamma}\log \left(\frac{6\delta_0}{\delta \RE z_g}\right).$$
Assume now that $\IM \gamma\neq 0$. For $\delta$ small enough, the size of the interval of times $t$ satisfying \eqref{tdelta} becomes larger than $\frac{2\pi}{|\IM \gamma|}$. This means that it is possible to find a time $t_\delta$ in this interval satisfying \eqref{condition}.

\ms 

\noindent {\em Step 5. Choice of $\chi_\delta$.}  
For all $\delta>0$, we have to fix the function $\chi_\delta\in \mathcal C^\infty(\RR)$ such that \eqref{condchi0}, \eqref{condchi}, \eqref{condchi2} are satisfied and such that $f_0+\delta f_1(0)\geq 0$. First of all, proceeding as in the proof of Lemma \ref{lem1}, we obtain that $g$ takes the form
\begin{equation}
\label{eqqg}
g(\theta,v)=-mF'(e_0)\cos\theta-mF'(e_0)\int_{-\infty}^{0}\gamma e^{\gamma s}\cos {\Theta}(s)ds,
\end{equation}
with $m=\int^{2 \pi}_{0}\rho_f(\theta)\cos\theta d\theta$. Hence, 
\begin{equation}
\label{majg}
|g(\theta,v)|\leq |m|\left(1+\frac{|\gamma|}{\RE \gamma}\right)|F'(e_0)|.
\end{equation}
The assumptions on $F$ and $F'$ in Theorem \ref{mainthm2} imply that
\begin{equation}
\label{alpha}\forall e<e_*,\qquad |F'(e)|\leq C(e_*-e)^{-\alpha}F(e)
\end{equation}
with $\alpha\geq 1$. Since $F(e)>0$ for $e<e_*$, the local assumption becomes global. Let $\chi$ be a $\mathcal C^\infty$ function such that $0\leq \chi\leq 1$ and
$$
\left\{\begin{array}{ll}
\ds \chi(t)=0\quad &\mbox{for }t\leq 0,\\
\ds \chi(t)\leq 2t^\alpha\quad &\mbox{for }t\geq 0,\\
\ds \chi(t)=1\quad &\mbox{for }t\geq 1
\end{array}\right.
$$
and let
\begin{equation}
\label{chidelta}\chi_\delta(e)=\chi\left(\frac{e_*-e}{\delta^{1/{(2\alpha)}}}\right).
\end{equation}
From 
$$\| (1-\chi_\delta)g\|_{L^1}\leq \|g\un_{e_*-\delta^{1/(2\alpha)}< e_0(\theta,v)<e_*}\|_{L^1}$$
and dominated convergence, we clearly have \eqref{condchi2}.
By \eqref{majg} and \eqref{alpha}, we have, for all $(\theta,v)\in \TT\times \RR$,
$$\delta\left|\RE g(\theta,v)\chi_\delta(e_0(\theta,v))\right|\leq \delta C|e_*-e_0|^{-\alpha}F(e_0)\frac{|e_*-e_0|^{\alpha}}{\delta^{1/2}}=C\delta^{1/2}f_0(\theta,v),$$
so for $\delta$ small enough, we have $f_0+\delta f_1(0)\geq 0$. 

By differentiating \eqref{eqqg} and using \eqref{estderiv}, we get
$$\forall j+\ell \leq N,\qquad |\pa^j_\theta\pa^\ell_v g(\theta,v)|\leq C\max_{k\leq N+1}F^{(k)}(e_0)\leq C|e_*-e_0|^{N-1},$$
where we used Taylor formulas and the fact that $F\in \mathcal C^\infty$ with $F(e)=0$ for $e\geq e_*$. Besides, from \eqref{chidelta}, we obtain (if $\delta\leq 1$)
$$\forall \ 1\leq j+\ell \leq N,\qquad |\pa^j_\theta\pa^\ell_v \chi_\delta(e_0(\theta,v))|\leq C\delta^{-N/(2\alpha)}\un_{e_*-\delta^{1/(2\alpha)}< e_0(\theta,v)< e_*}.$$
Therefore
\begin{align*}
\|g\chi_\delta\|_{W^{N,1}}&\leq \|g\|_{L^1}+C\delta^{-N/(2\alpha)}\int_0^{2\pi}\int_\RR|e_*-e_0(\theta,v)|^{N-1}\un_{e_*-\delta^{1/(2\alpha)}< e_0(\theta,v)< e_*}d\theta dv\\
&\leq \|g\|_{L^1}+C\delta^{-N/(2\alpha)}\int_{e_*-\delta^{1/(2\alpha)}}^{e_*}(e_*-e)^{N-1} \left(4\int_{0}^{\theta_{m_0}}\frac{d\theta}{\sqrt{2\left(e+m_0\cos\theta\right)}}\right)de\\
&\quad= \|g\|_{L^1}+C\delta^{-N/(2\alpha)}\int_{e_*-\delta^{1/(2\alpha)}}^{e_*}(e_*-e)^{N-1} \,T_e\,de,
\end{align*}
where $\theta_{m_0}=\arccos(-\frac{e}{m_0})$ and $T_e$ is given by \eqref{T2}. Now we recall that for $e\leq e_*$ we have $T_e\leq T_{e_*}$. This yields
$$\|g\chi_\delta\|_{W^{N,1}}\leq  \|g\|_{L^1}+CT_{e_*}\delta^{-N/(2\alpha)}\int_{e_*-\delta^{1/(2\alpha)}}^{e_*}(e_*-e)^{N-1}\,de=\|g\|_{L^1}+\frac{CT_{e_*}}{N}.$$
We have proved \eqref{condchi0}.

By \eqref{Rdelta}, we have 
\begin{align*}
\|\widetilde R_\delta\|_{L^1}&\leq C\left(\| (1-\chi_\delta)g\|_{L^1}+\|(1-\chi_\delta)\pa_vf_0\|_{L^1}\right)\\
&\leq C\left(\|g\un_{e_*-\delta^{1/(2\alpha)}< e_0(\theta,v)<e_*}\|_{L^1}+\|\pa_vf_0\un_{e_*-\delta^{1/(2\alpha)}<e_0(\theta,v)< e_*}\|_{L^1}\right),
\end{align*}
so by dominated convergence,
$$\lim_{\delta\to 0}\|\widetilde R_\delta\|_{L^1}=0.$$
We now choose $\delta$ small enough such that
$$\|R_\delta\|_{L^1}\leq \left(\frac{4\delta_0 }{\RE z_g}\right)^N.$$
From \eqref{tdelta}, we obtain \eqref{condchi}, which ends the proof of Theorem \ref{mainthm2}.
\qed

\appendix
\section{Appendix. Existence of unstable steady states}
\label{A1}
In this section, we prove that the set of steady states satisfying the assumptions of Theorems \ref{mainthm1} and \ref{mainthm2} is not empty.
More precisely, we prove the following
\begin{lemma}
\label{lemapp}
Let $m>0$. There exist $m>0$, $e_*<m$ and there exists a nonincreasing function $F$, $\mathcal {C} ^{\infty}$ on $\RR$, such that $F(e)>0$ for $e<e_*$, $F(e)=0$ for $e\geq e_*$ and $|F'(e)|\leq C|e_*-e|^{-\alpha}F(e)$ in the neighborhood of $e_*$, for some $\alpha\geq 1$, and such that the function $f(\theta,v)=F(\frac{v^2}{2}-m\cos \theta)$ is a steady state solution to the HMF model \eqref{sis1} and such that $\kappa(m,F)>1$,
where $\kappa(m,F)$ is given by
$$
\kappa(m,F)=\int^{2 \pi}_{0}\!\!\int^{+\infty}_{-\infty} \left|F'\!\left(e(\theta,v)\right)\right|\left(\frac{\ds \int_{\mathcal D_{e(\theta,v)}}(\cos \theta-\cos \theta')(e(\theta,v)+m\cos \theta')^{-1/2}d\theta'}{\ds\int_{\mathcal D_{e(\theta,v)}}(e(\theta,v)+m\cos \theta')^{-1/2}d\theta'} \right)^2d\theta dv,
$$
with $$e(\theta,v)=\frac{v^2}{2}-m\cos \theta,\qquad {\mathcal D_e}=\left\{\theta'\in \TT\,:\,\,m\cos\theta'>-e\right\}.$$
\end{lemma}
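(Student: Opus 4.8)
The plan is to reduce the construction to a one-dimensional inequality on the energy profile, which will hold with an explicit margin (a factor $2$). A function $f(\theta,v)=F\big(\tfrac{v^2}{2}-m\cos\theta\big)$ with $F\ge 0$ nonincreasing is a steady state of \eqref{sis1} if and only if the self-consistency relation $m=\mathcal M(m,F)$ holds, where $\mathcal M(m,F):=\int_0^{2\pi}\!\int_{\RR}F\big(\tfrac{v^2}{2}-m\cos\theta\big)\cos\theta\,d\theta\,dv$ (the $\sin\theta$-moment of the density vanishes by parity in $\theta$). For any fixed $m>0$ and any fixed nonincreasing, smooth, not identically zero shape $F_1$ supported in $(-\infty,e_*]$, monotonicity (pairing $\theta$ with $\pi-\theta$) gives $\mathcal M(m,F_1)>0$; hence $F:=\frac{m}{\mathcal M(m,F_1)}F_1$ solves the relation, and since $\kappa(m,cF)=c\,\kappa(m,F)$ and $\mathcal M(m,cF)=c\,\mathcal M(m,F)$ for $c>0$, we get $\kappa(m,F)=\frac{m\,\kappa(m,F_1)}{\mathcal M(m,F_1)}$. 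So it suffices to find $m>0$, $e_*<m$ and such an $F_1$ with $m\,\kappa(m,F_1)>\mathcal M(m,F_1)$.

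\textbf{Energy coordinates and a limiting profile.} Passing to the coordinate $(\theta,e)$ on each of $\{v>0\}$ and $\{v<0\}$ and using Fubini (for $e<m$ the weight $(e+m\cos\theta)^{-1/2}$ is integrable over $\mathcal D_e$), then setting $\Phi(\tau):=F_1(m\tau)$, $\tau_*:=e_*/m$, $\mathcal D_\tau:=\{\theta\in\TT:\cos\theta>-\tau\}$ and $W_p(\tau):=\int_{\mathcal D_\tau}(\tau+\cos\theta)^{p}\,d\theta$, one checks that the $m$-dependence cancels and
$$\frac{m\,\kappa(m,F_1)}{\mathcal M(m,F_1)}=\frac{\int_{-1}^{\tau_*}(-\Phi'(\sigma))\,A(\sigma)\,d\sigma}{\int_{-1}^{\tau_*}(-\Phi'(\sigma))\,B(\sigma)\,d\sigma},\qquad A=W_{3/2}-\frac{W_{1/2}^2}{W_{-1/2}},\qquad B(\sigma)=\int_{-1}^{\sigma}\big(W_{1/2}(s)-s\,W_{-1/2}(s)\big)\,ds.$$
Here $A(\sigma)\ge0$, being $W_{-1/2}(\sigma)$ times the variance of $\cos\theta$ under the probability measure proportional to $(\sigma+\cos\theta)^{-1/2}\mathbf{1}_{\mathcal D_\sigma}\,d\theta$; the denominator equals $\mathcal M$ up to a positive factor, hence is positive. (It is obtained from $\int\Phi\,(W_{1/2}-\tau W_{-1/2})\,d\tau$ by writing $\Phi(\tau)=\int_\tau^{\tau_*}(-\Phi')$, a second Fubini, and $\cos\theta(\tau+\cos\theta)^{-1/2}=(\tau+\cos\theta)^{1/2}-\tau(\tau+\cos\theta)^{-1/2}$.) Now fix $\chi\in\mathcal C^\infty(\RR)$ with $\chi\equiv0$ on $(-\infty,0]$, $\chi\equiv1$ on $[1,\infty)$, $0\le\chi\le1$, built from $e^{-1/s}$ so that $|\chi'(s)|\le C s^{-2}\chi(s)$ near $0$, and set $F_{1,\eta}(e):=\chi\big((e_*-e)/\eta\big)$: this profile is smooth, nonincreasing, positive for $e<e_*$, zero for $e\ge e_*$, and satisfies $|F_{1,\eta}'|\le C(e_*-e)^{-2}F_{1,\eta}$ near $e_*$ (so $\alpha=2$ works). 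As $\eta\to 0$ the measure $(-\Phi_\eta'(\sigma))\,d\sigma$, of total mass $\Phi_\eta(-1)=1$, converges weakly to $\delta_{\tau_*}$, hence $\kappa\big(m,\tfrac{m}{\mathcal M(m,F_{1,\eta})}F_{1,\eta}\big)\to A(\tau_*)/B(\tau_*)$ provided $B(\tau_*)\ne0$.

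\textbf{The key computation and conclusion.} It remains to pick $\tau_*\in(-1,1)$ with $A(\tau_*)>B(\tau_*)>0$; we take $\tau_*$ close to $1$. As $\tau\to1^-$, monotone convergence gives $W_{-1/2}(\tau)\uparrow\int_\TT(1+\cos\theta)^{-1/2}\,d\theta=+\infty$ (the weight fails to be integrable at $\theta=\pm\pi$), while dominated convergence gives $W_{1/2}(\tau)\to 4\sqrt2$ and $W_{3/2}(\tau)\to\int_\TT(1+\cos\theta)^{3/2}\,d\theta=\tfrac{16\sqrt2}{3}$; hence $A(\tau)\to\tfrac{16\sqrt2}{3}$. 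For $B$, since $W_{1/2}(s)-s\,W_{-1/2}(s)=\int_{\mathcal D_s}\cos\theta\,(s+\cos\theta)^{-1/2}\,d\theta$, a Fubini exchange gives the closed form
$$B(1)=\int_{-1}^{1}\!\int_{\mathcal D_s}\cos\theta\,(s+\cos\theta)^{-1/2}\,d\theta\,ds=2\int_{-\pi}^{\pi}\cos\theta\,\sqrt{1+\cos\theta}\,\,d\theta=\frac{8\sqrt2}{3}.$$
Thus $A$ and $B$ extend continuously to $\tau=1$ with $A(1)=2B(1)>0$, so for $\tau_*<1$ close enough to $1$ one has $A(\tau_*)>B(\tau_*)>0$ and $A(\tau_*)/B(\tau_*)>1$. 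Finally, taking $m=1$, $e_*=\tau_*$ and $\eta$ small enough, the steady state $f_0(\theta,v)=\tfrac{1}{\mathcal M(1,F_{1,\eta})}F_{1,\eta}\big(\tfrac{v^2}{2}-\cos\theta\big)$ has all the properties required by the lemma and satisfies $\kappa(1,F)>1$; this proves the claim.

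\textbf{Main obstacle.} The only genuine content is the computation above. It is favorable because near $\tau=1$ the integral $A(\tau)$ has the finite nonzero limit $\tfrac{16\sqrt2}{3}$ — the divergence of $W_{-1/2}$ is merely logarithmic and is harmlessly cancelled inside the ratio $W_{1/2}^2/W_{-1/2}$ — while $B(1)$ collapses, after a Fubini exchange, to the elementary integral $2\int_{-\pi}^{\pi}\cos\theta\sqrt{1+\cos\theta}\,d\theta=\tfrac{8\sqrt2}{3}$, giving the clean ratio $A(1)/B(1)=2$. The remaining ingredients (the two changes of variables, the two Fubini exchanges, the regularity and decay check on $F_{1,\eta}$, and the weak convergence $(-\Phi_\eta')\,d\sigma\rightharpoonup\delta_{\tau_*}$) are routine.
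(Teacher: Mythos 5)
Your proposal is correct and follows essentially the same route as the paper: normalize the profile so that the self-consistency condition $m=\mathcal M(m,F)$ holds, reduce $\kappa>1$ to the sign of the quantity $A-B$ (which is exactly the paper's $\alpha(e)g_1(e)=\alpha-\beta-\alpha(\Pi_1\cos\theta)^2$) at energies near the separatrix value, and conclude from the same explicit limits $A(1)=\tfrac{16\sqrt2}{3}$ and $B(1)=\beta(1)=\tfrac{8\sqrt2}{3}$. The only difference is cosmetic: you localize by an explicit concentrating family $F_{1,\eta}$ and argue directly, whereas the paper localizes via an arbitrary test profile $\Psi$ inside a contradiction argument.
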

\begin{proof}
Let $m>0$ and $F$ a nonincreasing $\mathcal {C} ^{\infty}$ function on $\RR$ supported in $(-\infty,m)$, which is not identically zero on $(-m,m)$. We first observe that $f(\theta,v)=F(\frac{v^2}{2}-m\cos \theta)$ is a steady state solution to the HMF model \eqref{sis1} if and only if $m$ and $F$ satisfy $\gamma(m,F)=m$ with
$$\gamma(m,F):=\int_0^{2\pi}\int_\RR F\left(\frac{v^2}{2}-m\cos\theta\right)\cos \theta d\theta dv>0.$$
By using the linearity of $\gamma$ in $F$ we deduce that $\frac{m}{\gamma(m,F)}F(\frac{v^2}{2}-m\cos \theta)$ is a steady state.

We proceed by a contradiction argument. Assume that
$$\kappa\left(m,\frac{m}{\gamma(m,F)}F\right)\leq 1$$
 for all $m>0$ and all nonincreasing $\mathcal {C} ^{\infty}$ function $F$ supported in $(-\infty,m)$ such that, denoting by $(-\infty,e_*]$ the support of $F$, we have $|F'(e)|\leq C|e_*-e|^{-\alpha}F(e)$ in the neighborhood of $e_*$, for some $\alpha\geq 1$. This is equivalent to
 $$\kappa\left(m,F\right)\leq \frac{\gamma(m,F)}{m},$$
or, after straightforward calculation and an integration by parts,
\begin{equation}
\label{num}-\iint F'\left(e(\theta,v)\right)g_m(e(\theta,v))d\theta dv\leq 0
\end{equation}
with
$$g_m(e)=(\Pi_m \cos^2\theta)(e)-\left((\Pi_m\cos \theta)(e)\right)^2-(\Pi_m \sin^2\theta)(e)$$
and for all function $h(\theta)$,
$$(\Pi_m h)(e)=\frac{\ds\int_{\mathcal D_e}(e+m\cos \theta)^{-1/2}h(\theta)d\theta}{\ds\int_{\mathcal D_e}(e+m\cos \theta)^{-1/2}d\theta}.$$
Now, we choose the functions $F$ as follows. We first pick a nonincreasing $\mathcal {C} ^{\infty}$ function $\Psi$ on $\RR$ with support $(\infty,e_\sharp]\subset(-\infty,m)$, then we set $e_*=\frac{e_\sharp+m}{2}$ and define
$$F_\eps(e)=\Psi(e)+\eps \exp\left(-(e_*-e)^{-1}\right), \quad \mbox{for }e<e_*,$$
the parameter $\eps>0$ being arbitrary. Since $F_\eps$ satisfies the assumptions, it satisfies \eqref{num}. Then, letting $\eps\to 0$, we get
$$-\iint \Psi'\left(e(\theta,v)\right)g_m(e(\theta,v))d\theta dv\leq 0.$$
The function $\Psi$ being arbitrary, this is equivalent to 
$$g_m(e)\leq 0,\qquad \forall m>0,\quad \forall e\in (-m,m),$$
or,
\begin{equation}
\label{g1}
g_1(e)\leq 0,\qquad \forall e\in (-1,1).
\end{equation}
Let us now prove that the function $g_1(e)$ is in fact positive in the neighborhood of $e=1$, which contradicts \eqref{g1}.

Indeed, we introduce
$$\alpha(e)=\int_{\mathcal D_e}(e+\cos\theta)^{-1/2}d\theta,\qquad \beta(e)=\int_{\mathcal D_e}(e+\cos\theta)^{-1/2}\sin^2\theta d\theta.$$
We have
\begin{align*}\alpha(e) g_1(e)
&=\alpha(e)-2\beta(e)-\frac{1}{\alpha(e)}\left(\int_{\mathcal D_e}(e+\cos\theta)^{1/2}d\theta-e\alpha(e)\right)^2\\
&=(1-e^2)\alpha(e)-2\beta(e)+2e\int_{\mathcal D_e}(e+\cos\theta)^{1/2}d\theta-\frac{1}{\alpha(e)}\left(\int_{\mathcal D_e}(e+\cos\theta)^{1/2}d\theta\right)^2.
\end{align*}
From \cite{llmehats}, we have
$$\alpha(e)\sim-\sqrt{2}\log(1-e)\quad \mbox{as }e\to 1^-,$$
and direct calculations yield
$$\int_0^{2\pi}(1+\cos\theta)^{1/2}d\theta=4\sqrt{2},\qquad \beta(1)=\frac{8\sqrt{2}}{3}.$$
This means that 
$$\alpha(e)g_1(e)\to \frac{8\sqrt{2}}{3}>0 \quad\mbox{as }e\to 1^-,  $$
$$ g_1(e)\sim \frac{8\sqrt{2}}{\alpha(e)}\mbox{as }e\to 1^-.$$
This proves the claim.
\end{proof}

\subsubsection*{Acknowledgments}
The authors wish to thank D. Han-Kwan for helpful discussions. A. M. Luz acknowledges support by the Brazilian National Council for Scientific and Technological Development (CNPq) under the program ``Science without Borders’’ 249279/2013-4. M. Lemou and F. M\'ehats acknowledge supports from the ANR project  MOONRISE ANR-14-CE23-0007-01, from the ENS Rennes project MUNIQ and from the INRIA project ANTIPODE.

{}
\end{document}